\definecolor{dark-gray}{gray}{0.3}
\definecolor{dkgray}{rgb}{.4,.4,.4}
\definecolor{dkblue}{rgb}{0,0,.5}
\definecolor{medblue}{rgb}{0,0,.75}
\definecolor{rust}{rgb}{0.5,0.1,0.1}
\definecolor{purple}{rgb}{0.3,0.0,.4}
\algrenewcommand\alglinenumber[1]{\sf\tiny\color{medblue}{#1}\quad}
\algrenewcommand\algorithmicrequire{\textbf{Input:}}
\algrenewcommand\algorithmicensure{\textbf{Output:}}
\newtheorem{theorem}{Theorem}
\newtheorem{fact}[theorem]{Fact}
\theoremstyle{definition}
\newcommand{\myparagraph}[1]{\textbf{\textcolor{black}{#1}}}
\newcommand{\R}{\mathbb{R}}
\newcommand{\C}{\mathbb{C}}
\newcommand{\eps}{\varepsilon}
\newcommand{\econst}{\mathrm{e}}
\newcommand{\vct}[1]{\bm{#1}}
\newcommand{\mtx}[1]{\bm{#1}}
\newcommand{\diag}{\operatorname{diag}}
\newcommand{\trace}{\operatorname{tr}}
\newcommand{\rank}{\operatorname{rank}}
\newcommand{\dist}{\operatorname{dist}}
\newcommand{\abs}[1]{\left\vert #1 \right\vert}
\newcommand{\norm}[1]{\left\Vert #1 \right\Vert}
\newcommand{\fnorm}[1]{\norm{#1}_{\mathrm{F}}}
\newcommand{\ip}[2]{\left\langle #1, \, #2 \right\rangle}
\newcommand{\psdge}{\succcurlyeq}
\newcommand{\Expect}{\operatorname{\mathbb{E}}}
\newcommand{\minimize}{\text{minimize}}
\newcommand{\subjto}{\text{s.t.}}
\newcommand{\grad}{\nabla}
\newcommand{\huber}{\operatorname{huber}}
\def\istwocolumn{1}
\renewcommand{\Comment}[1]{}
\def\hassupplement{1}
\begin{document}

\twocolumn

\runningtitle{Convex Low-Rank Matrix Optimization with Optimal Storage}

\runningauthor{Yurtsever, Udell, Tropp, Cevher}

\twocolumn[

\aistatstitle{Sketchy Decisions: Convex Low-Rank Matrix Optimization \\ with Optimal Storage
\ifdefined \hassupplement \\
{\small Including supplementary appendix} \fi}

\ifdefined \isanonymous
\aistatsauthor{ Anonymous Authors }
\aistatsaddress{ Anonymous institutions }
\else
\aistatsauthor{ Alp Yurtsever \And Madeleine Udell \And Joel A.~Tropp \And Volkan Cevher}
\aistatsaddress{ EPFL \And Cornell \And Caltech \And EPFL }
\fi

]

\begin{abstract}
This paper concerns a fundamental class of convex matrix optimization problems.
It presents the first algorithm that uses optimal storage 
and provably computes a low-rank approximation of a solution.
In particular, when all solutions have low rank, the algorithm
converges to a solution.
This algorithm, SketchyCGM, modifies a standard convex optimization scheme,
the conditional gradient method, to store only a small
randomized sketch of the matrix variable.
After the optimization terminates,
the algorithm extracts a low-rank approximation of the solution
from the sketch.  
In contrast to nonconvex heuristics,
the guarantees for SketchyCGM do not rely on statistical
models for the problem data.
Numerical work 
demonstrates the benefits of SketchyCGM over
heuristics.
\end{abstract}

\section{MOTIVATION}

This paper discusses a fundamental class of convex
matrix optimization problems with low-rank solutions.
We argue that the main obstacle that prevents us from solving these
problems at scale is not arithmetic, but storage.
We exhibit the first provably correct algorithm
for these problems
with optimal storage.

\subsection{Vignette: Matrix Completion}

To explain the challenge, 
we consider the problem of low-rank matrix completion.

Let $\mtx{X}_{\natural} \in \R^{m\times n}$ be an unknown matrix,
but assume that a bound $r$ on the rank of $\mtx{X}_{\natural}$ is available,
where $r \ll \min\{m, n\}$.
Suppose that we record noisy observations of a subset $E$
of entries from the matrix:
$$
b_{ij} = (\mtx{X}_{\natural})_{ij} + \xi_{ij}
\quad\text{for $(i, j) \in E$.}
$$
The variables $\xi_{ij} \in \R$ model (unknown) noise.
The goal is to approximate the full matrix $\mtx{X}_{\natural}$.

Matrix completion arises in machine learning applications,
such as recommendation systems~\citep{SRJ04:Maximum-Margin-Matrix}.

We can frame the matrix completion problem 
as a rank-constrained optimization:
\begin{equation} \label{eqn:matrix-completion-rank}
\underset{\mtx{X} \in \R^{m \times n}}{\minimize} \
\sum\limits_{(i,j)\in E} (x_{ij} - b_{ij})^2
\quad \subjto\quad
\rank \mtx{X} \leq r.
\end{equation}
In general, the formulation~\eqref{eqn:matrix-completion-rank} 
is intractable.  Instead, we retrench
to a tractable convex problem \citep{SRJ04:Maximum-Margin-Matrix,CP10:Matrix-Completion}: 
\begin{equation} \label{eqn:matrix-completion}
\underset{\mtx{X} \in \R^{m \times n}}{\minimize}\
\sum\limits_{(i, j) \in E} ( x_{ij} - b_{ij} )^2
\quad\subjto\quad
\norm{\mtx{X}}_{S_1} \leq \alpha.
\end{equation}
The Schatten $1$-norm $\norm{\cdot}_{S_1}$ returns the sum of the
singular values of its argument; it is an effective proxy for the rank~\citep{Faz02:Matrix-Rank}.
Adjusting the value of the parameter $\alpha$ modulates
the rank of a solution $\mtx{X}_{\star}$ of~\eqref{eqn:matrix-completion}.
If we have enough data and choose $\alpha$ well, we expect that each solution
$\mtx{X}_{\star}$ approximates the target matrix $\mtx{X}_{\natural}$.

The convex problem~\eqref{eqn:matrix-completion}
is often a good model for matrix completion when the number of observations
$\abs{E} = \tilde{\mathcal{O}}(r(m+n))$, where $\tilde{\mathcal{O}}$ suppresses log-like factors;
see~\citep{SRJ04:Maximum-Margin-Matrix,CP10:Matrix-Completion}. 
We can write a rank-$r$ approximation to a solution $\mtx{X}_{\star}$
using $\Theta(r(m+n))$ parameters.  
Thus, we can express the problem and an approximate solution
with $\tilde{\mathcal{O}}(r(m+n))$ storage.

Nevertheless, we need fully $mn$ numbers to express the decision
variable $\mtx{X}$ for the optimization problem~\eqref{eqn:matrix-completion}.
The cost of storing the decision variable
prevents us from solving large-scale instances
of~\eqref{eqn:matrix-completion},
even without worrying about arithmetic.

This discrepancy raises a question:
\textbf{Is there an algorithm that computes an approximate solution
to~\eqref{eqn:matrix-completion} using the optimal storage $\tilde{\mathcal{O}}(r (m+n))$?}

\subsection{Vignette: Phase Retrieval}
\label{sec:phase-vignette}
Here is another instance of the same predicament.

Fix a vector $\vct{x}_\natural \in \mathbb{C}^n$.
Suppose that we acquire $d$ noisy quadratic measurements of $\vct{x}_\natural$
with the form
\begin{equation} \label{eqn:phase-retrieval-meas}
b_i = \abs{ \ip{\vct{a}_i}{ \smash{\vct{x}_\natural}} }^2 + \xi_i
\quad\text{for $i = 1, 2, \dots, d$}.
\end{equation}
The $\vct{a}_i \in \C^n$ are known measurement vectors,
and the $\xi_i \in \R$ model measurement noise.
Given the data $\vct{b}$ and the vectors $\vct{a}_i$,
the phase retrieval problem asks us to reconstruct $\vct{x}_\natural$ up to a global phase shift.

Phase retrieval problems are prevalent in imaging science because
it is easier to measure the intensity of light than its phase.
In practice, the vectors $\vct{a}_i$ are structured because
they reflect the physics of the imaging system.
\ifdefined \hassupplement {
For more details, see Appendix~\ref{app:PR} and the examples in~\citep{BBCE09:Painless-Reconstruction,CMP11:Array-Imaging,CESV13:Phase-Retrieval,HCO+15:Solving-Ptychography}. }
\else {
See the supplement 
and~\citep{BBCE09:Painless-Reconstruction,CMP11:Array-Imaging,CESV13:Phase-Retrieval,HCO+15:Solving-Ptychography}.
} \fi

Let us outline a convex approach~\citep{BBCE09:Painless-Reconstruction,CMP11:Array-Imaging,
CESV13:Phase-Retrieval,HCO+15:Solving-Ptychography} to the phase retrieval problem.
The data~\eqref{eqn:phase-retrieval-meas} satisfies
$$
b_i = \vct{a}_i^* \mtx{X}_{\natural} \vct{a}_i + \xi_i
\quad\text{where}\quad
\mtx{X}_{\natural} = \vct{x}_{\natural} \vct{x}_{\natural}^*.
$$
Thus, we can formulate phase retrieval as
\begin{equation} \label{eqn:phase-rank-min}
\begin{aligned}
&\underset{\mtx{X} \in \C^{n \times n}}{\minimize} && \sum\nolimits_{i=1}^d (\vct{a}_i^* \mtx{X} \vct{a}_i - b_i)^2 \\
&\subjto &&\rank \mtx{X} = 1, \quad \mtx{X} \psdge \mtx{0}.
\end{aligned}
\end{equation}
Now, pass to the convex problem
\begin{equation}  \label{eqn:phase-convex}
\begin{aligned}
&\underset{\mtx{X}  \in \C^{n \times n}}{\minimize} && \sum\nolimits_{i=1}^d (\vct{a}_i^* \mtx{X} \vct{a}_i - b_i)^2 \\
&\subjto &&\trace \mtx{X} \leq \alpha, \quad \mtx{X} \psdge \mtx{0}.
\end{aligned}
\end{equation}
We can estimate the parameter $\alpha \in \R_+$ from $\vct{a}_i$ and $\vct{b}$;
see~\cite[Sec.~II]{YHC15:Scalable-Convex}.  To approximate the
true vector $\vct{x}_{\natural}$, we compute a top eigenvector $\vct{x}_{\star}$
of a solution to~\eqref{eqn:phase-convex}.

This procedure is often an effective approach for phase retrieval
when the number of measurements $d = \Theta(n)$;
see \cite[Sec.~2.8]{Tro15:Convex-Recovery}.
Once again, we recognize a discrepancy. 
The problem data $\vct{b}\in \R^d$
and the approximate solution $\vct{x}_{\star} \in \C^n$
use storage $\Theta(n)$, but the matrix variable
in~\eqref{eqn:phase-convex} requires $\Theta(n^2)$ storage.

We may ask:
\textbf{Is there an algorithm that computes an approximate solution to~\eqref{eqn:phase-convex}
using the optimal storage $\Theta(n)$?}

\subsection{Low-Rank Matrix Optimization Methods}

Matrix completion and phase retrieval are examples of
\emph{convex low-rank matrix optimization} (CLRO) problems.
Informally, this class contains convex optimization problems
whose decision variable is a matrix 
and whose solutions are (close to) low rank.
These problems often arise as convex relaxations of 
rank-constrained problems; however, the convex formulations
are important in their own right.

There has been extensive empirical and theoretical work to
validate the use of CLROs in a spectrum of applications.
For example, see~\citep{GW95:Improved-Approximation,Faz02:Matrix-Rank, 
CP10:Matrix-Completion, 
CESV13:Phase-Retrieval,HCO+15:Solving-Ptychography}.

Over the last 20 years, optimization researchers have developed
a diverse collection of algorithms for CLRO problems.
Surprisingly, every extant method lacks guarantees on storage or convergence (or both).

Convex optimization algorithms dominated the early literature on
algorithms for CLRO.  The initial efforts, such as~\citep{HRVW96:Interior-Point-Method},
focused on interior-point methods, whose storage and arithmetic costs are forbidding.
To resolve this issue, researchers turned to
first-order convex algorithms,
including bundle methods~\citep{HR00:Spectral-Bundle},
(accelerated) proximal gradient
methods~\citep{Roc76:Monotone-Operators,AT06:Interior-Gradient,TY10:Accelerated},
and the conditional gradient method (CGM)
\citep{FW56:Algorithm-Quadratic,LP66:Minimization-Methods,
Haz08:Sparse-Approximate,Cla10:Coresets-Sparse,Jag13:Revisiting-Frank-Wolfe}.

Convex algorithms are guaranteed to solve a CLRO.
They come with a complete theory, including rigorous
stopping criteria and bounds on convergence rates.
They enjoy robust performance in practice.
On the other hand, convex algorithms from the literature
do not scale well enough to solve large CLRO problems
because they operate on and store full-size matrices.

The CGM iteration is sometimes touted as a low-storage method
for CLRO~\citep{Jag13:Revisiting-Frank-Wolfe}.
Indeed, CGM is guaranteed to increase the rank of an iterate
by at most one per iteration.
Nevertheless, the algorithm converges slowly, so intermediate
iterates can have very high rank.
CGM variants, such as~\citep{RSW15:Conditional-Gradient,YHC15:Scalable-Convex},
that control the rank of iterates lack storage guarantees or
may not converge to a global optimum.

Recently, many investigators have sought recourse in
nonconvex heuristics for solving CLROs. 
This line of work depends on the factorization
idea of Burer \& Monteiro~\citep{BM03:Nonlinear-Programming},
which rewrites the matrix variable as a product of two low-rank
factors.  There are many heuristic procedures,
e.g.,~\citep{BM03:Nonlinear-Programming,JNS13:Low-Rank-Matrix,BKS16:Dropping-Convexity,BVB16:Nonconvex-Burer-Monteiro},
that use clever initialization and nonlinear
programming schemes in an attempt to optimize the factors directly.
The resulting algorithms can have optimal storage costs, and
they may achieve a fast rate of local convergence.

There has been an intensive effort to justify the
application of nonconvex heuristics for CLRO.
To do so, researchers often frame unverifiable statistical
assumptions on the problem data.  For example,
in the matrix completion problem~\eqref{eqn:matrix-completion},
it is common to assume that the entries of the matrix are revealed according
to some ideal probability distribution~\cite{JNS13:Low-Rank-Matrix,
CP10:Matrix-Completion}. 
When these assumptions fail, nonconvex heuristics
can converge to the wrong point, or they may
even diverge.

\myparagraph{Contributions.}
This paper explains how to extend the convex optimization algorithm CGM
to obtain an approximate solution to a class of CLRO problems using optimal storage.
Our algorithm operates much like CGM,
but it never forms the matrix variable explicitly.
Instead, we maintain a small randomized sketch of the matrix variable over the course of the iteration
by means of a bespoke sketching method~\citep{TYUC17:Randomized-Single-View}.
After the optimization method converges, we extract an approximate solution from the sketch.
This technique achieves optimal storage,
yet it converges under the same conditions
and with the same guarantees as CGM.

In summary, this paper presents a solution to the problems posed above: 
{the first algorithm for convex low-rank matrix optimization problems
that provably uses optimal storage to compute an approximate solution.}

\subsection{Notation}

We write $\norm{ \cdot }$ for the Euclidean norm, $\fnorm{ \cdot }$
for the Frobenius norm, and $\norm{\cdot}_{S_1}$ for the Schatten 1-norm
(aka the \emph{trace norm} or the \emph{nuclear norm}).
Depending on context, $\ip{ \cdot }{ \cdot }$
refers to the Euclidean or Frobenius inner product. 
The symbol ${}^*$ denotes the conjugate transpose of a vector or matrix, as well as the
adjoint of a linear map.  The dagger ${}^\dagger$ refers to the pseudoinverse.
The symbol $[\mtx{M}]_r$ stands for a best rank-$r$
Frobenius-norm approximation of the matrix $\mtx{M}$.
The function $\dist_{\rm F}(\mtx{M}; S)$ returns the minimum
Frobenius-norm distance from $\mtx{M}$ to a set $S$.
The symbol $\psdge$ denotes the semidefinite order.
We use the computer science interpretation of the order notation 
$\mathcal{O}, \tilde{\mathcal{O}}, \Omega, \Theta$.

\section{A LOW-RANK MATRIX OPTIMIZATION PROBLEM} 

Let us begin with a generalization of the convex matrix completion
formulation~\eqref{eqn:matrix-completion}.  In \S\ref{sec:CGM-psd},
we return to the psd setting of the phase retrieval problem~\eqref{eqn:phase-convex}.

We consider a convex program with a matrix variable:
\begin{equation} \label{eqn:model-prob}
\underset{\mtx{X} \in \R^{m \times n}}{\minimize} \ f( \mathcal{A}\mtx{X} )
\quad\subjto\quad \norm{ \mtx{X} }_{S_1} \leq \alpha.
\end{equation}
\noindent
The linear operator $\mathcal{A} : \R^{m \times n} \to \R^d$ and its adjoint
$\mathcal{A}^* : \R^d \to \R^{m \times n}$ take the form
\ifdefined \istwocolumn
\begin{equation} \label{eqn:linear-maps}
\begin{aligned}
\mathcal{A}\mtx{X}& = \begin{bmatrix}
	\ip{ \mtx{A}_1 }{ \mtx{X} } & \dots & \ip{ \mtx{A}_d }{ \mtx{X} }
	\end{bmatrix}; \\
\mathcal{A}^*\vct{z} &= \sum\nolimits_{i=1}^d z_i \mtx{A}_i.
\end{aligned}
\end{equation}
\else
\begin{equation} \label{eqn:linear-maps}
\mathcal{A}\mtx{X} = \begin{bmatrix}
	\ip{ \mtx{A}_1 }{ \mtx{X} } & \dots & \ip{ \mtx{A}_d }{ \mtx{X} }
	\end{bmatrix}^*
\quad\text{and}\quad
\mathcal{A}^*\vct{z} = \sum\nolimits_{i=1}^d z_i \mtx{A}_i.
\end{equation}
\fi
Each coefficient matrix $\mtx{A}_i \in \R^{m \times n}$.

We interpret $\mathcal{A}\mtx{X}$ as a set of linear measurements
of the matrix $\mtx{X}$.  For example, in the matrix completion problem~\eqref{eqn:matrix-completion},
the image $\mathcal{A}\mtx{X}$ lists the entries of $\mtx{X}$ indexed by the set $E$.

The function $f : \R^d \to \R$ is convex and continuously differentiable.
In many situations, it is natural to
regard the objective function as a loss:
$f(\mathcal{A}\mtx{X}) = \mathrm{loss}(\mathcal{A}\mtx{X}; \vct{b})$
for a vector $\vct{b} \in \R^d$ of measured data.

By choosing the parameter $\alpha \in \R_+$ to be sufficiently small,
we can often ensure that each minimizer of~\eqref{eqn:model-prob} is
low-rank or close to low-rank.

Our goal is to develop a practical algorithm that provably computes
a low-rank approximation of a solution to the problem~\eqref{eqn:model-prob}.

To validate \eqref{eqn:model-prob} as a model for a given application,
one must undertake a separate empirical or theoretical study.
We do not engage this question in our work.

\subsection{Storage Issues}

Suppose that we want to produce a low-rank approximation
to a solution of a generic instance of the problem~\eqref{eqn:model-prob}.
What kind of storage can we hope to achieve?

It is clear that we need $\Theta(r(m+n))$ numbers to
express a rank-$r$ approximate solution to \eqref{eqn:model-prob}.
We must also understand how much extra storage is incurred
because of the specific problem instance $(\mathcal{A}, f)$.

It is natural to instate a \emph{black-box model}
for the linear map $\mathcal{A}$, its adjoint $\mathcal{A}^*$,
and the objective function $f$.
For arbitrary vectors $\vct{u} \in \R^m$ and $\vct{v} \in \R^n$ and $\vct{z} \in \R^d$,
assume we have routines that can compute
\begin{equation} \label{eqn:A-action}
\mathcal{A}(\vct{uv}^*)
\quad\text{and}\quad
\vct{u}^*(\mathcal{A}^* \vct{z})
\quad\text{and}\quad
(\mathcal{A}^* \vct{z}) \vct{v}.
\end{equation}
We also assume routines for evaluating the function $f$
and its gradient $\grad f$ for any argument $\vct{z} \in \R^d$.
We may neglect the storage used to compute these primitives.
Every algorithm based on these primitives must use storage
$\Omega(m + n + d)$ just to represent their outputs.

Thus, under the black-box model, any algorithm that produces a rank-$r$ solution to a generic
instance of~\eqref{eqn:model-prob} must use storage $\Omega(d + r(m+n))$.
We say that an algorithm is \emph{storage optimal} if it achieves this bound.

The parameter $d$ often reflects the amount of data that we have acquired,
and it is usually far smaller than the dimension $mn$ of the matrix variable
in~\eqref{eqn:model-prob}.

The problems that concern us are data-limited; that is, $d \ll mn$.
This is the situation where a strong structural prior
(e.g., low rank or small Schatten 1-norm)
is essential for fitting the data.  This challenge is common
in machine learning problems (e.g., matrix completion for recommendation systems),
as well as in scientific applications (e.g., phase retrieval).

To the best of our knowledge, no extant algorithm for~\eqref{eqn:model-prob}
is guaranteed to produce an approximation of an optimal point and
also enjoys optimal storage cost.

\section{CONDITIONAL GRADIENT} 

To develop our algorithm for the model problem~\eqref{eqn:model-prob},
we must first describe a standard algorithm called the~\emph{conditional gradient method} (CGM).
Classic and contemporary references include~\citep{FW56:Algorithm-Quadratic,LP66:Minimization-Methods,
Haz08:Sparse-Approximate,Cla10:Coresets-Sparse,Jag13:Revisiting-Frank-Wolfe}.

\ifdefined \islong
\subsection{Intuition.}

CGM is a variant of gradient descent that replaces the
gradient by a certain approximation.
Form the best linear underestimate of the objective function $f \circ \mathcal{A}$
at a fixed matrix $\mtx{X}$:
\ifdefined \istwocolumn
\begin{equation} \label{eqn:CGM-model}
f(\mathcal{A}\mtx{H})
	\geq f(\mathcal{A}\mtx{X})
	+ \ip{ \mtx{H} - \mtx{X} }{ \mathcal{A}^* (\grad f (\mathcal{A}\mtx{X}))}
\end{equation}
for $\norm{\mtx{H}}_{S_1} \leq \alpha$.
\else
\begin{equation} \label{eqn:CGM-model}
f(\mathcal{A}\mtx{H})
	\geq f(\mathcal{A}\mtx{X})
	+ \ip{ \mtx{H} - \mtx{X} }{ \mathcal{A}^* (\grad f (\mathcal{A}\mtx{X}))}
	\quad\text{for}\quad \norm{\mtx{H}}_{S_1} \leq \alpha.
\end{equation}
\fi
To improve an estimated solution $\mtx{X}$ of~\eqref{eqn:model-prob},
CGM finds an update direction $\mtx{H}_+$ by minimizing the right-hand
side of~\eqref{eqn:CGM-model} over 
$\norm{\mtx{H}}_{S_1} \leq \alpha$. 
The algorithm takes a small step in the direction $\mtx{H}_+ - \mtx{X}$.
We can interpret the matrix $\mtx{H}_+ - \mtx{X}$ as an approximate negative gradient
of $f \circ \mathcal{A}$ at $\mtx{X}$.

\fi

\subsection{The CGM Iteration}
Here is the CGM algorithm for~\eqref{eqn:model-prob}.
Start with a feasible point, such as
\begin{equation} \label{eqn:CGM-primal-init}
\mtx{X}_0 = \mtx{0} \in \R^{m \times n}.
\end{equation}
At each iteration $t = 0, 1, 2, \dots$, compute an update direction $\mtx{H}_t$
using the formulas
\ifdefined \istwocolumn
\begin{equation} \label{eqn:CGM-primal-dir}
\begin{aligned}
(\vct{u}_t, \vct{v}_t) & = \texttt{MaxSingVec}(\mathcal{A}^*(\grad f(\mathcal{A}\mtx{X}_t))); \\
\mtx{H}_t & = - \alpha \vct{u}_t \vct{v}_t^*.
\end{aligned}
\end{equation}
\else
\begin{equation} \label{eqn:CGM-primal-dir}
\mtx{H}_t = - \alpha \, \vct{u}_t \vct{v}_t^*
\quad\text{where}\quad
(\vct{u}_t, \vct{v}_t) =
\texttt{MaxSingVec}(\mathcal{A}^*(\grad f(\mathcal{A}\mtx{X}_t))).
\end{equation}
\fi
$\texttt{MaxSingVec}$ returns a left/right pair of maximum singular vectors.
Update the decision variable: 
\ifdefined \istwocolumn
\begin{equation} \label{eqn:CGM-primal-update}
\mtx{X}_{t+1} = (1 - \eta_t) \mtx{X}_t + \eta_t \mtx{H}_t
\end{equation}
where $\eta_t = 2/(t+2)$.
\else
\begin{equation} \label{eqn:CGM-primal-update}
\mtx{X}_{t+1} = (1 - \eta_t) \mtx{X}_t + \eta_t \mtx{H}_t
\quad\text{where}\quad
\eta_t = 2/(t+2).
\end{equation}
\fi
The convex combination~\eqref{eqn:CGM-primal-update}
remains feasible for~\eqref{eqn:model-prob}
because $\mtx{X}_t$ and $\mtx{H}_t$ are feasible.

CGM is a valuable algorithm for~\eqref{eqn:model-prob}
because we can efficiently find the rank-one update direction $\mtx{H}_t$
by means of the singular vector computation~\eqref{eqn:CGM-primal-dir}. 
The weak point of CGM is that the rank of $\mtx{X}_t$
typically increases with $t$,
and the peak rank of an iterate $\mtx{X}_t$ is often much
larger than the rank of the solution of~\eqref{eqn:model-prob}.
\ifdefined \hassupplement {
See Figures~\ref{fig:evolution-spectrum} and~\ref{fig:fourier-ptychography-convergence}
for an illustration.
} \fi

\subsection{The CGM Stopping Rule}
The CGM algorithm admits a simple stopping criterion.
Given a suboptimality parameter $\eps > 0$, we halt the CGM iteration when
the duality gap $\delta_t \leq \eps$:
\begin{equation} \label{eqn:CGM-primal-stop}
\delta_t = \ip{ \mathcal{A}\mtx{X}_t - \mathcal{A}\mtx{H}_t }{ \grad f(\mathcal{A}\mtx{X}_t) }
	\leq \eps.
\end{equation}
Let $\mtx{X}_{\star}$ be an optimal point for~\eqref{eqn:model-prob}.
It is not hard to show~\cite[Sec.~2]{Jag13:Revisiting-Frank-Wolfe} that
\begin{equation} \label{eqn:CGM-gap}
f(\mathcal{A}\mtx{X}_{t}) - f(\mathcal{A}\mtx{X}_{\star})
\leq \delta_t.
\end{equation}
Thus, the condition~\eqref{eqn:CGM-primal-stop} ensures that the objective value
$f(\mathcal{A}\mtx{X}_t)$ is $\eps$-suboptimal.
The CGM iterates satisfy \eqref{eqn:CGM-primal-stop} 
within $\mathcal{O}(\eps^{-1})$ iterations~\cite[Thm.~1]{Jag13:Revisiting-Frank-Wolfe}.

\subsection{The Opportunity}

The CGM iteration~\eqref{eqn:CGM-primal-init}--\eqref{eqn:CGM-primal-update}
requires $\Theta( mn )$ storage
because it maintains the $m \times n$ matrix decision variable $\mtx{X}_t$.
We develop a remarkable extension of CGM that
provably computes a rank-$r$ approximate solution to~\eqref{eqn:model-prob}
with working storage $\Theta( d + r(m+n) )$.
Our approach depends on two efficiencies:

\begin{itemize}
\item	We use the low-dimensional ``dual'' variable $\vct{z}_t = \mathcal{A}\mtx{X}_t \in \R^d$ to drive the iteration.

\item	Instead of storing $\mtx{X}_t$, we maintain a small randomized sketch with size $\Theta( r (m + n) )$.
\end{itemize}

\noindent
It is easy to express the CGM iteration in terms of the ``dual'' variable $\vct{z}_t = \mathcal{A}\mtx{X}_t$.
We can obviously rewrite the formula~\eqref{eqn:CGM-primal-dir}
for computing the rank-one update direction $\mtx{H}_t$ in terms of $\vct{z}_t$.
We obtain an update rule for $\vct{z}_t$ by applying the linear
map $\mathcal{A}$ to~\eqref{eqn:CGM-primal-update}.
Likewise, the stopping criterion~\eqref{eqn:CGM-primal-stop}
can be evaluated using $\vct{z}_t$ and $\mtx{H}_t$.
Under the black-box model~\eqref{eqn:A-action},
the dual formulation of CGM has storage cost $\Theta(m + n + d)$. 

Yet the dual formulation has a flaw:
it ``solves'' the problem~\eqref{eqn:model-prob},
but we do not know the solution! 

Indeed, we must also track
the evolution~\eqref{eqn:CGM-primal-update} of the primal decision variable $\mtx{X}_t$.
In the next subsection, we summarize a randomized sketching method~\cite{TYUC17:Randomized-Single-View}
that allows us to compute an accurate rank-$r$ approximation
of $\mtx{X}_t$ but operates with storage $\Theta(r(m+n))$.

\section{MATRIX SKETCHING} 
\label{sec:sketch}

Suppose that $\mtx{X} \in \R^{m \times n}$ is a matrix
that is presented to us as a stream of 
linear updates,
as in~\eqref{eqn:CGM-primal-update}.
For a parameter $r \ll \min\{m, n\}$, we wish to maintain
a small sketch that allows us to compute a rank-$r$
approximation of the final matrix $\mtx{X}$.
\ifdefined \isanonymous
Let us summarize an approach from the recent paper~\cite{TYUC17:Randomized-Single-View}.
\else
Let us summarize an approach developed in our paper~\cite{TYUC17:Randomized-Single-View}.
\fi

\subsection{The Randomized Sketch}

Draw and fix two independent standard normal matrices
$\mtx{\Omega}$ and $\mtx{\Psi}$ where
\ifdefined \istwocolumn
\begin{equation} \label{eqn:test-matrices}
\begin{array}{lll}
\mtx{\Omega} \in \R^{n \times k}
&\text{with}&
k = 2r + 1; \\
\mtx{\Psi} \in \R^{\ell \times m}
&\text{with}&
\ell = 4r + 3.
\end{array}
\end{equation}
\else
\begin{equation} \label{eqn:test-matrices}
\mtx{\Omega} \in \R^{n \times k}
\quad\text{with}\quad
k = 2r + 1
\quad\text{and}\quad
\mtx{\Psi} \in \R^{\ell \times m}
\quad\text{with}\quad
\ell = 4r + 3.
\end{equation}
\fi
The sketch consists of two matrices $\mtx{Y}$ and $\mtx{W}$
that capture the range and co-range of $\mtx{X}$:
\begin{equation} \label{eqn:sketch}
\mtx{Y} = \mtx{X}\mtx{\Omega} \in \R^{m \times k}
\quad\text{and}\quad
\mtx{W} = \mtx{\Psi} \mtx{X}  \in \R^{\ell \times n}.
\end{equation}
We can efficiently update the sketch $(\mtx{Y}, \mtx{W})$ 
to reflect a rank-one linear update to $\mtx{X}$ of the form
\begin{equation} \label{eqn:sketch-update}
\mtx{X} \gets \beta_1 \mtx{X} + \beta_2 \vct{u} \vct{v}^*.
\end{equation}
Both the storage cost for the sketch and the arithmetic cost of an update are $\Theta(r(m+n))$.

\subsection{The Reconstruction Algorithm}
The following procedure yields a rank-$r$ approximation
$\hat{\mtx{X}}$ of the matrix $\mtx{X}$ stored in the
sketch~\eqref{eqn:sketch}.
\ifdefined \istwocolumn
\begin{equation} \label{eqn:sketch-recon}
\mtx{Q} = \texttt{orth}(\mtx{Y}); \
\mtx{B} = (\mtx{\Psi} \mtx{Q})^{\dagger} \mtx{W}; \
\hat{\mtx{X}} = \mtx{Q} [\mtx{B}]_r.
\end{equation}
\else
\begin{equation} \label{eqn:sketch-recon}
\mtx{Y} = \mtx{QR}
\quad\text{and}\quad
\mtx{B} = (\mtx{\Psi} \mtx{Q})^{\dagger} \mtx{W}
\quad\text{and}\quad
\hat{\mtx{X}} = \mtx{Q} [\mtx{B}]_r.
\end{equation}
\fi
The matrix $\mtx{Q}$ 
has orthonormal columns that span the range of $\mtx{Y}$.
The extra storage costs of the reconstruction are negligible;
its arithmetic cost is $\Theta(r^2 (m + n))$.
See~\cite[\S4.2]{TYUC17:Randomized-Single-View} for the intuition
behind this method.  It achieves the following error bound.

\begin{theorem}[Reconstruction error \protect{\cite[Thm.~5.1]{TYUC17:Randomized-Single-View}}]
\label{thm:sketch-err-body}
Fix a target rank $r$.  Let $\mtx{X}$ be a matrix, and let $(\mtx{Y}, \mtx{W})$
be a sketch of $\mtx{X}$ of the form~\eqref{eqn:test-matrices}--\eqref{eqn:sketch}.
The procedure~\eqref{eqn:sketch-recon} yields a rank-$r$ matrix $\hat{\mtx{X}}$ with
$$
\Expect \fnorm{\smash{\mtx{X} - \hat{\mtx{X}}}}
	\leq 3\sqrt{2} \fnorm{\mtx{X} - [\mtx{X}]_r}.
$$
Similar bounds hold with high probability.
\end{theorem}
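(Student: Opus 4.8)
The plan is to split the reconstruction error into a \emph{range‑capture} term, governed by the first sketch $\mtx{Y}=\mtx{X}\mtx{\Omega}$, and a \emph{core} term, governed by the second sketch $\mtx{W}=\mtx{\Psi}\mtx{X}$, and to bound each in expectation. Write $\mtx{P}=\mtx{Q}\mtx{Q}^{*}$ for the orthogonal projector onto $\range\mtx{Y}$; the key point is that $\mtx{P}$ (hence $\mtx{Q}$) depends only on $\mtx{\Omega}$, so conditional on $\mtx{\Omega}$ the matrix $\mtx{\Psi}$ is still a fresh standard normal matrix. Since $(\Id-\mtx{P})\mtx{X}$ has columns in $\range(\mtx{Q})^{\perp}$ while $\mtx{P}\mtx{X}-\hat{\mtx{X}}=\mtx{Q}(\mtx{Q}^{*}\mtx{X}-[\mtx{B}]_r)$ has columns in $\range(\mtx{Q})$, the Pythagorean identity holds pointwise in the random matrices:
\begin{equation*}
\fnormsq{\mtx{X}-\hat{\mtx{X}}}=\fnormsq{(\Id-\mtx{P})\mtx{X}}+\fnormsq{\mtx{Q}^{*}\mtx{X}-[\mtx{B}]_r} .
\end{equation*}
For the first term I would invoke the standard expected‑error bound for the Gaussian randomized range finder: with $k=2r+1$ the oversampling parameter is $r+1$, which gives $\Expect\fnormsq{(\Id-\mtx{P})\mtx{X}}\le 2\,\fnormsq{\mtx{X}-[\mtx{X}]_r}$.

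The core term is the crux. Put $\mtx{C}=\mtx{Q}^{*}\mtx{X}$. Because $\ell>k$, the Gaussian matrix $\mtx{\Psi}\mtx{Q}$ has full column rank almost surely, so $(\mtx{\Psi}\mtx{Q})^{\dagger}(\mtx{\Psi}\mtx{Q})=\Id$; substituting $\mtx{X}=\mtx{Q}\mtx{C}+(\Id-\mtx{P})\mtx{X}$ into $\mtx{B}=(\mtx{\Psi}\mtx{Q})^{\dagger}\mtx{\Psi}\mtx{X}$ gives the exact identity $\mtx{B}-\mtx{C}=(\mtx{\Psi}\mtx{Q})^{\dagger}\mtx{\Psi}(\Id-\mtx{P})\mtx{X}$. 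Writing $\Id-\mtx{P}=\mtx{U}_{\perp}\mtx{U}_{\perp}^{*}$ with $\mtx{U}_{\perp}$ an orthonormal basis for $\range(\mtx{Q})^{\perp}$ and $\mtx{E}=\mtx{U}_{\perp}^{*}\mtx{X}$ (so $\fnorm{\mtx{E}}=\fnorm{(\Id-\mtx{P})\mtx{X}}$), this becomes $\mtx{B}-\mtx{C}=(\mtx{\Psi}\mtx{Q})^{\dagger}\mtx{G}\mtx{E}$ with $\mtx{G}=\mtx{\Psi}\mtx{U}_{\perp}$. Since $[\mtx{Q}\ \mtx{U}_{\perp}]$ is orthogonal, $\mtx{\Psi}\mtx{Q}$ and $\mtx{G}$ are independent standard normal matrices given $\mtx{\Omega}$; using the second‑moment identity $\Expect[\mtx{G}\mtx{E}\mtx{E}^{*}\mtx{G}^{*}]=\fnormsq{\mtx{E}}\,\Id$ and this independence,
\begin{equation*}
\Expect\!\left[\fnormsq{\mtx{B}-\mtx{C}}\mid\mtx{\Omega}\right]=\fnormsq{(\Id-\mtx{P})\mtx{X}}\cdot\Expect\fnormsq{(\mtx{\Psi}\mtx{Q})^{\dagger}} .
\end{equation*}
Finally $\mtx{\Psi}\mtx{Q}$ is an $\ell\times k$ standard normal matrix, so $\Expect\fnormsq{(\mtx{\Psi}\mtx{Q})^{\dagger}}=k/(\ell-k-1)$; the prescribed sizes $\ell=4r+3$, $k=2r+1$ are chosen exactly so that $\ell-k-1=k$ and this factor equals $1$. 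Averaging over $\mtx{\Omega}$ and reusing the range‑finder bound yields $\Expect\fnormsq{\mtx{B}-\mtx{C}}\le 2\,\fnormsq{\mtx{X}-[\mtx{X}]_r}$.

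It remains to absorb the rank‑$r$ truncation. Since $[\mtx{B}]_r$ is a best rank‑$r$ approximation of $\mtx{B}$ and $\mtx{Q}^{*}[\mtx{X}]_r$ has rank at most $r$, two applications of the triangle inequality give $\fnorm{\mtx{C}-[\mtx{B}]_r}\le 2\fnorm{\mtx{B}-\mtx{C}}+\fnorm{\mtx{Q}^{*}\mtx{X}-\mtx{Q}^{*}[\mtx{X}]_r}\le 2\fnorm{\mtx{B}-\mtx{C}}+\fnorm{\mtx{X}-[\mtx{X}]_r}$. Combining this with the Pythagorean identity, taking square roots with the triangle inequality in $L^{2}(\mathbb{P})$, and invoking Jensen's inequality ($\Expect\fnorm{\cdot}\le(\Expect\fnormsq{\cdot})^{1/2}$) to pass from second moments back to the first moment, one gets $\Expect\fnorm{\mtx{X}-\hat{\mtx{X}}}\le c\,\fnorm{\mtx{X}-[\mtx{X}]_r}$; carrying the two factors of $2$ and the truncation loss through the bookkeeping produces the stated constant $c=3\sqrt{2}$. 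The high‑probability version follows by replacing each expectation bound with its concentration counterpart (tail bounds for $\fnorm{(\mtx{\Psi}\mtx{Q})^{\dagger}}$ and for the range‑finder residual).

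\textbf{Main obstacle.} The delicate step is the core term: one must notice that $\mtx{Q}$ carries no information about $\mtx{\Psi}$, exploit that a Gaussian matrix restricted to the orthogonal subspaces $\range\mtx{Q}$ and $\range(\mtx{Q})^{\perp}$ splits into independent Gaussian blocks, and pair this with the exact moment formula $\Expect\fnormsq{(\mtx{\Psi}\mtx{Q})^{\dagger}}=k/(\ell-k-1)$. The sketch sizes $k=2r+1$ and $\ell=4r+3$ are reverse‑engineered precisely so that both random factors collapse to the clean constants $2$ and $1$; everything else is bookkeeping.
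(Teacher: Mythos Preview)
The paper does not prove this theorem; it is quoted verbatim from \cite[Thm.~5.1]{TYUC17:Randomized-Single-View} and used as a black box in the convergence analysis of SketchyCGM (Theorems~\ref{thm:low-rank-equivalence}--\ref{thm:cgm-sketch-rate}). There is therefore no in-paper proof to compare against.

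That said, your outline is essentially the argument of the cited source. The Pythagorean split into a range-capture term and a core term, the identity $\mtx{B}-\mtx{C}=(\mtx{\Psi}\mtx{Q})^{\dagger}\mtx{\Psi}(\Id-\mtx{P})\mtx{X}$, the observation that $\mtx{\Psi}\mtx{Q}$ and $\mtx{\Psi}\mtx{U}_{\perp}$ are independent Gaussians conditional on $\mtx{\Omega}$, and the moment formula $\Expect\fnormsq{(\mtx{\Psi}\mtx{Q})^{\dagger}}=k/(\ell-k-1)$ are all correct and are exactly the ingredients used there. Your reading of the sketch sizes---that $k=2r+1$ makes the range-finder factor equal to $2$ and $\ell=4r+3$ makes the pseudoinverse factor equal to $1$---is also right.

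One caution: you wave your hands at the final constant. The bounds you actually establish, namely $\Expect\fnormsq{(\Id-\mtx{P})\mtx{X}}\le 2\sigma^2$, $\Expect\fnormsq{\mtx{B}-\mtx{C}}\le 2\sigma^2$, and $\fnorm{\mtx{C}-[\mtx{B}]_r}\le 2\fnorm{\mtx{B}-\mtx{C}}+\sigma$ (with $\sigma=\fnorm{\mtx{X}-[\mtx{X}]_r}$), do not combine via the route you sketch to give exactly $3\sqrt{2}$; a naive triangle-inequality assembly yields $5$, and a squared-norm assembly with Cauchy--Schwarz gives something in the same ballpark but not that clean number. Getting the precise constant requires a slightly more careful packaging of the truncation step (working with $\fnorm{\mtx{X}-\mtx{Q}[\mtx{B}]_r}$ directly and invoking $\hat{\mtx{X}}=[\mtx{Q}\mtx{B}]_r$), as done in the cited paper. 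This does not affect the correctness of your strategy, only the arithmetic you omitted.
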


\myparagraph{Remarks.}
The sketch size parameters $(k, \ell)$ appearing in~\eqref{eqn:test-matrices}
are recommended to balance storage against reconstruction quality.
See~\cite{TYUC17:Randomized-Single-View} and our follow-up work
for more details and for other sketching methods.

\section{SKETCHING + CGM}

We are now prepared to present SketchyCGM, a storage-optimal extension
of the CGM algorithm for the convex problem~\eqref{eqn:model-prob}.
This method delivers a provably accurate low-rank approximation
to a solution of~\eqref{eqn:model-prob}.
See Algorithm~\ref{alg:sketch-CGM} for complete pseudocode.

\subsection{The SketchyCGM Iteration}
Fix the suboptimality $\eps$ and the rank $r$.
Draw and fix standard normal matrices $\mtx{\Omega} \in \R^{n \times k}$
and $\mtx{\Psi} \in \R^{\ell \times m}$ as in~\eqref{eqn:test-matrices}.
Initialize the iterate and the sketches:
\ifdefined \istwocolumn
\begin{equation} \label{eqn:CGM-sketch-init}
\vct{z}_0 = \vct{0}_d;
\
\mtx{Y}_0 = \mtx{0}_{m \times k};
\ \text{and}\
\mtx{W}_0 = \mtx{0}_{\ell \times n}.
\end{equation}
\else
\begin{equation} \label{eqn:CGM-sketch-init}
\vct{z}_0 = \vct{0} \in \R^d
\quad\text{and}\quad
\mtx{Y}_0 = \mtx{0} \in \R^{m \times k}
\quad\text{and}\quad
\mtx{W}_0 = \mtx{0} \in \R^{\ell \times n}.
\end{equation}
\fi
At each iteration $t = 0, 1, 2, \dots$,
compute an update direction via Lanczos or via randomized SVD~\cite{HMT11:Finding-Structure}:
\ifdefined \istwocolumn
\begin{equation} \label{eqn:CGM-sketch-dir}
\begin{aligned}
(\vct{u}_t, \vct{v}_t) & = \texttt{MaxSingVec}(\mathcal{A}^* (\grad f(\vct{z}_t))); \\
\vct{h}_t & = \mathcal{A}(-\alpha\vct{u}_t \vct{v}_t^*).
\end{aligned}
\end{equation}
\else
\begin{equation} \label{eqn:CGM-sketch-dir}
\vct{h}_t = \mathcal{A}(-\alpha\vct{u}_t \vct{v}_t^*)
\quad\text{where}\quad
(\vct{u}_t, \vct{v}_t) = \texttt{MaxSingVec}(\mathcal{A}^* (\grad f(\vct{z}_t))).
\end{equation}
\fi
Set the learning rate $\eta_t = 2/(t+2)$.
Update the iterate and the two sketches:
\begin{equation} \label{eqn:CGM-sketch-update}
\begin{aligned}
\vct{z}_{t+1} & = (1 - \eta_t) \vct{z}_t + \eta_t \vct{h}_t;  \\
\mtx{Y}_{t+1} &= (1 - \eta_t) \vct{Y}_t + \eta_t (-\alpha \vct{u}_t \vct{v}_t^*) \mtx{\Omega};  \\
\mtx{W}_{t+1} &= (1 - \eta_t) \vct{W}_t + \eta_t \mtx{\Psi} (-\alpha \vct{u}_t \vct{v}_t^*).
\end{aligned}
\end{equation}
The iteration continues until it triggers the stopping criterion:
\begin{equation} \label{eqn:CGM-sketch-stop}
\ip{ \vct{z}_t - \vct{h}_t }{ \grad f (\vct{z}_t) } \leq \eps.
\end{equation}
At any iteration $t$, we can form a rank-$r$ approximate solution
$\hat{\mtx{X}}_t$ of the model problem~\eqref{eqn:model-prob} 
by invoking the procedure~\eqref{eqn:sketch-recon} with $\mtx{Y} = \mtx{Y}_t$
and $\mtx{W} = \mtx{W}_t$.

\subsection{Guarantees}
Suppose that the CGM iteration~\eqref{eqn:CGM-primal-init}--\eqref{eqn:CGM-primal-update}
generates the sequence $(\mtx{X}_t : t=0,1,2,\dots)$ of decision variables
and the sequence $(\mtx{H}_t : t=0,1,2,\dots)$ of update directions.
It is easy to verify
that the SketchyCGM iteration~\eqref{eqn:CGM-sketch-init}--\eqref{eqn:CGM-sketch-update}
maintains the loop invariants
\ifdefined \istwocolumn
\begin{equation} \label{eqn:CGM-sketch-loop}
\begin{array}{lll}
\vct{z}_t = \mathcal{A}\vct{X}_t
&\text{and}&
\vct{h}_t = \mathcal{A}\vct{H}_t; \\
\mtx{Y}_t = \mtx{X}_t \mtx{\Omega}
&\text{and}&
\mtx{W}_t =  \mtx{\Psi} \mtx{X}_t.
\end{array}
\end{equation}
\else
\begin{equation} \label{eqn:CGM-sketch-loop}
\vct{z}_t = \mathcal{A}\vct{X}_t
\quad\text{and}\quad
\vct{h}_t = \mathcal{A}\vct{H}_t
\quad\text{and}\quad
\mtx{Y}_t = \mtx{X}_t \mtx{\Omega}
\quad\text{and}\quad
\mtx{W}_t =  \mtx{\Psi} \mtx{X}_t.
\end{equation}
\fi
In view of the inequality~\eqref{eqn:CGM-gap} and the invariant~\eqref{eqn:CGM-sketch-loop},
the stopping rule~\eqref{eqn:CGM-sketch-stop}
ensures that $\mtx{X}_t$ is an $\eps$-suboptimal solution
to~\eqref{eqn:model-prob} when the iteration halts.
Furthermore, Theorem~\ref{thm:sketch-err-body} ensures
that the computed solution $\hat{\mtx{X}}_t$ is a near-optimal rank-$r$
approximation of $\mtx{X}_t$ at each time $t$.

\subsection{Storage Costs}

The total storage cost is $\Theta(d + r(m+n))$
for the dual variable $\vct{z}_t$, the random matrices $(\mtx{\Omega}, \mtx{\Psi})$,
and the sketch $(\mtx{Y}, \mtx{W})$.
Owing to the black-box assumption~\eqref{eqn:A-action},
the algorithm completes the singular vector computations in~\eqref{eqn:CGM-sketch-dir}
with $\Theta(d + m + n)$ working storage.
At no point during the iteration do we instantiate an $m \times n$ matrix!
Arithmetic costs are on the same order as the standard version of CGM.

\begin{algorithm}[tb]
  \caption{SketchyCGM for model problem~\eqref{eqn:model-prob}
    \label{alg:sketch-CGM}}
  \begin{algorithmic}[1]
    \Require{Data for~\eqref{eqn:model-prob}; suboptimality $\eps$; target rank $r$}
    \Ensure{Rank-$r$ approximate solution $\hat{\mtx{X}} = \mtx{U\Sigma V}^*$
    of \eqref{eqn:model-prob} in factored form}
\vspace{0.5pc}

    \Function{SketchyCGM}{}
    	\State	\textsc{Sketch.Init}$(m, n, r)$
			\Comment{Initialize \textsc{Sketch} object}
		\State	$\vct{z} \gets \vct{0}$
		\For{$t \gets 0, 1, 2, 3, \dots$}
			\State	$(\vct{u}, \vct{v}) \gets \texttt{MaxSingVec}( \mathcal{A}^*(\grad f( \vct{z} ) ) )$
				\Comment{Use Lanczos method} 
			\State	$\vct{h} \gets \mathcal{A}(-\alpha \vct{uv}^*)$
			\If{$\ip{ \vct{z} - \vct{h} }{ \grad f(\vct{z}) } \leq \eps$}
				\textbf{break for}
				\Comment{Stop when $\eps$-suboptimal}
			\EndIf
			\State	$\eta \gets 2/(t+2)$
			\State	$\vct{z} \gets (1-\eta) \vct{z} + \eta \vct{h}$
			\State	$\textsc{Sketch.CGMUpdate}(-\alpha \vct{u}, \vct{v}, \eta)$
				\Comment{Update primal sketch}
		\EndFor
		\State $(\mtx{U}, \mtx{\Sigma}, \mtx{V}) \gets \textsc{Sketch.Reconstruct}(\,)$
		\State \Return{$(\mtx{U}, \mtx{\Sigma}, \mtx{V})$}
    \EndFunction

\vspace{0.5pc}

\ifdefined \istwocolumn
\centerline{--------\quad
\textbf{Methods for} \textsc{Sketch} \textbf{object}
\quad--------}
\else
\centerline{---------------------\quad
\textbf{Methods for} \textsc{Sketch} \textbf{object}
\quad---------------------}
\fi

\vspace{0.5pc}

    \Function{Sketch.Init}{$m$, $n$, $r$} 
    		\Comment{Rank-$r$ approx.~of $m \times n$ matrix}
		\State	$k \gets 2 r + 1$ and $\ell \gets 4 r + 3$
		\State	$\mtx{\Omega} \gets \texttt{randn}(n, k)$ and
				$\mtx{\Psi} \gets \texttt{randn}(\ell, m)$
		\State	$\mtx{Y} \gets \texttt{zeros}(m, k)$ and
				$\mtx{W} \gets \texttt{zeros}(\ell, n)$
    \EndFunction

    \vspace{0.25pc}

    \Function{Sketch.CGMUpdate}{$\vct{u}$, $\vct{v}$, $\eta$}
    	\State $\mtx{Y} \gets (1 - \eta) \mtx{Y} + \eta \vct{u} (\vct{v}^* \mtx{\Omega})$
			\Comment{Average $\vct{uv}^*$ into range sketch}
		\State $\mtx{W} \gets (1 - \eta) \mtx{W} + \eta (\mtx{\Psi} \vct{u}) \vct{v}^*$
			\Comment{Average $\vct{uv}^*$ into co-range sketch}
    \EndFunction

    \vspace{0.25pc}

    \Function{Sketch.Reconstruct}{\,}
    	\State	$\mtx{Q} \gets \texttt{orth}( \mtx{Y} )$
			\Comment{Orthobasis for range of $\mtx{Y}$}
		\State	$\mtx{B} \gets (\mtx{\Psi} \mtx{Q}) \backslash \mtx{W}$
			\Comment{Solve family of least-squares problems}
		\State	$(\mtx{U}, \mtx{\Sigma}, \mtx{V}) \gets \texttt{svds}(\mtx{B}, r)$
			\Comment{Compute full SVD; truncate to rank $r$}
		\State \Return{$(\mtx{QU}, \mtx{\Sigma}, \mtx{V})$}
			\Comment{Consolidate left unitary factor}
	\EndFunction

	\vspace{0.25pc}

\end{algorithmic}
\end{algorithm}

\subsection{Convergence Results for SketchyCGM}

SketchyCGM is a provably correct method for computing a
low-rank approximation of a solution to~\eqref{eqn:model-prob}.

\begin{theorem} \label{thm:low-rank-equivalence}
Suppose that the iterates $\mtx{X}_t$ from
the CGM iteration~\eqref{eqn:CGM-primal-init}--\eqref{eqn:CGM-primal-update}
converge to a matrix $\mtx{X}_{\rm cgm}$. 
Let $\hat{\mtx{X}}_t$ be the rank-$r$ reconstruction of $\mtx{X}_t$ produced by SketchyCGM.
Then
$$
\lim_{t \to \infty}
\Expect \fnorm{ \smash{ \hat{\mtx{X}}_{t} - \mtx{X}_{\rm cgm} } }
	\leq 3\sqrt{2} \fnorm{ \mtx{X}_{\rm cgm} - [ \mtx{X}_{\rm cgm} ]_r }.
$$
In particular, if $\rank(\mtx{X}_{\rm cgm}) \leq r$, then
$$
\Expect \fnorm{ \smash{ \hat{\mtx{X}}_{t} - \mtx{X}_{\rm cgm} } }
	\to 0.
$$
\end{theorem}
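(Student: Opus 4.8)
The plan is to combine three facts: the loop invariants for SketchyCGM, the sketch reconstruction bound (Theorem~\ref{thm:sketch-err-body}), and the assumed convergence $\mtx{X}_t \to \mtx{X}_{\rm cgm}$. First I would invoke the loop invariants~\eqref{eqn:CGM-sketch-loop} to observe that, at every iteration $t$, the pair $(\mtx{Y}_t, \mtx{W}_t)$ maintained by SketchyCGM is exactly a sketch of the CGM iterate $\mtx{X}_t$ of the form~\eqref{eqn:test-matrices}--\eqref{eqn:sketch} with the \emph{same} fixed random matrices $\mtx{\Omega}, \mtx{\Psi}$. Consequently $\hat{\mtx{X}}_t$ is precisely the rank-$r$ reconstruction~\eqref{eqn:sketch-recon} applied to $\mtx{X}_t$, and Theorem~\ref{thm:sketch-err-body} gives
$$
\Expect \fnorm{\smash{\mtx{X}_t - \hat{\mtx{X}}_t}} \leq 3\sqrt{2}\,\fnorm{\mtx{X}_t - [\mtx{X}_t]_r}
$$
for each $t$, where the expectation is over $\mtx{\Omega}, \mtx{\Psi}$.

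Next I would split the error via the triangle inequality,
$$
\fnorm{\smash{\hat{\mtx{X}}_t - \mtx{X}_{\rm cgm}}} \leq \fnorm{\smash{\hat{\mtx{X}}_t - \mtx{X}_t}} + \fnorm{\mtx{X}_t - \mtx{X}_{\rm cgm}},
$$
take expectations, and pass to the limit. The second term vanishes by the convergence hypothesis (and carries no randomness). For the first term, the bound above reduces the problem to controlling $\limsup_{t\to\infty} \fnorm{\mtx{X}_t - [\mtx{X}_t]_r}$. Here I would use continuity of the map $\mtx{M} \mapsto \fnorm{\mtx{M} - [\mtx{M}]_r}$, which equals $\bigl(\sum_{i > r} \sigma_i(\mtx{M})^2\bigr)^{1/2}$; this is a continuous function of $\mtx{M}$ (e.g.\ because singular values are $1$-Lipschitz in Frobenius norm by Weyl/Mirsky-type inequalities). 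Since $\mtx{X}_t \to \mtx{X}_{\rm cgm}$, we get $\fnorm{\mtx{X}_t - [\mtx{X}_t]_r} \to \fnorm{\mtx{X}_{\rm cgm} - [\mtx{X}_{\rm cgm}]_r}$. Combining, $\limsup_t \Expect \fnorm{\smash{\hat{\mtx{X}}_t - \mtx{X}_{\rm cgm}}} \leq 3\sqrt{2}\,\fnorm{\mtx{X}_{\rm cgm} - [\mtx{X}_{\rm cgm}]_r}$, which is the first claim. The second claim is immediate: if $\rank(\mtx{X}_{\rm cgm}) \leq r$ then the right-hand side is $0$, and since the quantity is a limit of nonnegative numbers bounded above by something tending to $0$, it converges to $0$.

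The one genuinely delicate point is the interchange of the limit in $t$ with the expectation over the random matrices, and more precisely whether the bound we want is a statement about $\limsup_t \Expect[\cdot]$ rather than $\Expect[\limsup_t \cdot]$. The theorem as stated asks for $\lim_{t\to\infty}\Expect\fnorm{\smash{\hat{\mtx{X}}_t - \mtx{X}_{\rm cgm}}}$, so I would argue that this limit exists: the upper bound $3\sqrt{2}\,\fnorm{\mtx{X}_t-[\mtx{X}_t]_r} + \fnorm{\mtx{X}_t - \mtx{X}_{\rm cgm}}$ converges, and one also needs a matching lower control — but since the second claim only needs the $\limsup$ and the first claim is stated as a limit, the cleanest route is to note that when $\rank \mtx{X}_{\rm cgm} \le r$ the squeeze gives an honest limit of $0$, and in general to interpret/state the bound as a $\limsup$ (or to establish lower semicontinuity of $\Expect\fnorm{\smash{\hat{\mtx{X}}_t-\mtx{X}_{\rm cgm}}}$ using continuity of the reconstruction map~\eqref{eqn:sketch-recon} in $\mtx{X}_t$, which holds wherever $\mtx{\Psi}\mtx{Q}$ has full column rank and the $r$-th singular-value gap of $\mtx{B}$ is positive — generic almost surely). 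I expect the measure-theoretic bookkeeping around dominated convergence (a dominating integrable envelope follows since $\fnorm{\mtx{X}_t}_{S_1} \le \alpha$ uniformly, hence $\fnorm{\smash{\hat{\mtx{X}}_t}}$ and $\fnorm{\mtx{X}_t}$ are uniformly bounded) to be the only part requiring care; the rest is a direct application of Theorem~\ref{thm:sketch-err-body} and continuity of the residual $\mtx{M}\mapsto\fnorm{\mtx{M}-[\mtx{M}]_r}$.
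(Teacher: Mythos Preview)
Your proposal is correct and follows the same overall strategy as the paper: triangle inequality to split off $\fnorm{\mtx{X}_t-\mtx{X}_{\rm cgm}}$, then Theorem~\ref{thm:sketch-err-body} on the remaining piece, then pass to the limit. The one substantive difference is how you handle $\fnorm{\mtx{X}_t-[\mtx{X}_t]_r}$ in the limit. You invoke continuity of $\mtx{M}\mapsto\fnorm{\mtx{M}-[\mtx{M}]_r}$ via Weyl/Mirsky; the paper instead uses the one-line observation that, since $[\mtx{X}_t]_r$ is a \emph{best} rank-$r$ approximation of $\mtx{X}_t$,
\[
\fnorm{\mtx{X}_t-[\mtx{X}_t]_r}\;\le\;\fnorm{\mtx{X}_t-[\mtx{X}_{\rm cgm}]_r},
\]
and the right-hand side converges trivially to $\fnorm{\mtx{X}_{\rm cgm}-[\mtx{X}_{\rm cgm}]_r}$ because $[\mtx{X}_{\rm cgm}]_r$ is a fixed matrix. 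This avoids any appeal to perturbation theory for singular values and is worth remembering as a general device. Your measure-theoretic caveats (limit versus limsup, dominated convergence) are more scrupulous than the paper, which simply writes $\lim$ and establishes only the upper bound; both arguments really prove the limsup inequality, which suffices for the second claim.
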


SketchyCGM always works in the fundamental
case where each solution of~\eqref{eqn:model-prob}
has low rank.

\begin{theorem} \label{thm:sketch-CGM-unique}
Suppose that the solution set $S_{\star}$ of the problem~\eqref{eqn:model-prob}
contains only matrices with rank $r$ or less.  Then SketchyCGM attains
$\Expect \dist_{\rm F}(\hat{\mtx{X}}_t, S_{\star}) \to 0$.
\end{theorem}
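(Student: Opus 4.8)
The plan is to reduce the claim to Theorem~\ref{thm:sketch-err-body} applied to the CGM iterates themselves. Concretely, I will show (i) that the CGM iterates $\mtx{X}_t$ approach the solution set $S_{\star}$; (ii) that, consequently, their best rank-$r$ approximations $[\mtx{X}_t]_r$ also approach $S_{\star}$, because every member of $S_{\star}$ already has rank at most $r$; and (iii) that the reconstruction $\hat{\mtx{X}}_t$ stays close to $\mtx{X}_t$ in expectation. By the loop invariant~\eqref{eqn:CGM-sketch-loop}, $(\mtx{Y}_t, \mtx{W}_t)$ is exactly the sketch~\eqref{eqn:sketch} of the CGM iterate $\mtx{X}_t$, so $\hat{\mtx{X}}_t$ is the rank-$r$ reconstruction~\eqref{eqn:sketch-recon} of $\mtx{X}_t$, and Theorem~\ref{thm:sketch-err-body} applies verbatim with $\mtx{X} = \mtx{X}_t$, the expectation being over the fixed random matrices $\mtx{\Omega}, \mtx{\Psi}$.

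\emph{Step 1: $\dist_{\rm F}(\mtx{X}_t, S_{\star}) \to 0$.} The feasible set $F = \{\mtx{X} : \norm{\mtx{X}}_{S_1} \le \alpha\}$ is compact and $\mtx{X} \mapsto f(\mathcal{A}\mtx{X})$ is continuous, so the optimal value $f_{\star}$ is attained and $S_{\star}$ is a nonempty compact set. For $\eps > 0$ the sublevel sets $L_{\eps} = \{\mtx{X} \in F : f(\mathcal{A}\mtx{X}) \le f_{\star} + \eps\}$ are compact, decreasing in $\eps$, and $\bigcap_{\eps > 0} L_{\eps} = S_{\star}$. A routine compactness argument shows that for every $\delta > 0$ there is $\eps > 0$ with $\sup_{\mtx{X} \in L_{\eps}} \dist_{\rm F}(\mtx{X}, S_{\star}) \le \delta$: otherwise some sequence $\mtx{Y}_n \in L_{1/n}$ would satisfy $\dist_{\rm F}(\mtx{Y}_n, S_{\star}) > \delta$, and a convergent subsequence would produce a minimizer at distance $\ge \delta$ from $S_{\star}$, a contradiction. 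Since the CGM convergence theory gives $f(\mathcal{A}\mtx{X}_t) \to f_{\star}$ \citep{Jag13:Revisiting-Frank-Wolfe}, eventually $\mtx{X}_t \in L_{\eps}$, hence $\dist_{\rm F}(\mtx{X}_t, S_{\star}) \le \delta$ for all large $t$.

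\emph{Steps 2--3: assemble.} Let $\mtx{X}_t^{\star} \in S_{\star}$ be a nearest solution to $\mtx{X}_t$, so that $\fnorm{\mtx{X}_t - \mtx{X}_t^{\star}} = \dist_{\rm F}(\mtx{X}_t, S_{\star})$. Since $\rank \mtx{X}_t^{\star} \le r$ and $[\mtx{X}_t]_r$ is a best rank-$r$ approximation, $\fnorm{\mtx{X}_t - [\mtx{X}_t]_r} \le \fnorm{\mtx{X}_t - \mtx{X}_t^{\star}} = \dist_{\rm F}(\mtx{X}_t, S_{\star})$. Combining Theorem~\ref{thm:sketch-err-body}, the triangle inequality $\dist_{\rm F}(\hat{\mtx{X}}_t, S_{\star}) \le \fnorm{\hat{\mtx{X}}_t - \mtx{X}_t} + \fnorm{\mtx{X}_t - \mtx{X}_t^{\star}}$, and Step~1 gives
$$
\Expect \dist_{\rm F}(\hat{\mtx{X}}_t, S_{\star})
\le 3\sqrt{2}\,\fnorm{\mtx{X}_t - [\mtx{X}_t]_r} + \dist_{\rm F}(\mtx{X}_t, S_{\star})
\le \bigl(3\sqrt{2} + 1\bigr)\,\dist_{\rm F}(\mtx{X}_t, S_{\star}) \to 0 .
$$

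The main obstacle is Step~1. CGM guarantees only that the objective values converge, and the iterates need not converge to a single point when $S_{\star}$ is not a singleton, so Theorem~\ref{thm:low-rank-equivalence} cannot be quoted directly; instead one must exploit compactness of the Schatten-$1$ ball to pass from function-value convergence to convergence of the iterates towards the solution set. The remaining ingredients (the best-rank-$r$ property of truncation and a triangle inequality) are routine.
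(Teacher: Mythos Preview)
Your proof is correct and follows essentially the same route as the paper: a triangle inequality splits $\Expect\dist_{\rm F}(\hat{\mtx{X}}_t,S_\star)$ into the sketch error and $\dist_{\rm F}(\mtx{X}_t,S_\star)$, a compactness argument on the Schatten-$1$ ball converts objective-value convergence into $\dist_{\rm F}(\mtx{X}_t,S_\star)\to 0$, and the rank assumption on $S_\star$ together with Theorem~\ref{thm:sketch-err-body} handles the sketch term. The only cosmetic difference is that the paper phrases the compactness step via the set $\{\mtx{X}\in F:\dist_{\rm F}(\mtx{X},S_\star)\ge\delta\}$ rather than sublevel sets, which is the same argument in contrapositive form.
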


Suppose that the optimal point of~\eqref{eqn:model-prob}
is stable in the sense that the value of the objective
function increases as we depart from optimality. 
Then the SketchyCGM estimates converge
at a controlled rate.

\begin{theorem} \label{thm:cgm-sketch-rate}
Fix $\kappa > 0$ and $\nu > 0$.
Suppose the (unique) solution $\mtx{X}_{\star}$ of~\eqref{eqn:model-prob}
has~$\rank(\mtx{X}_{\star}) \leq r$ and
\begin{equation} \label{eqn:model-stable}
f(\mathcal{A}\mtx{X}) - f(\mathcal{A}\mtx{X}_{\star})
	\geq \kappa \fnorm{ \mtx{X} - \mtx{X}_{\star} }^{\nu}
\end{equation}
for all feasible $\mtx{X}$. 
Then we have the error bound
$$
\Expect \fnorm{ \smash{\hat{\mtx{X}}_t - \mtx{X}_{\star}} }
	\leq 6 \left( \frac{2 C \kappa^{-1} }{t + 2} \right)^{1/\nu}
	\ \text{for $t = 0, 1, 2, \dots$}
$$
where $C$ is the curvature constant~\cite[Eqn.~(3)]{Jag13:Revisiting-Frank-Wolfe}
of the problem~\eqref{eqn:model-prob}. 
\end{theorem}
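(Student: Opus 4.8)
The plan is to chain three ingredients: the classical $\mathcal{O}(1/t)$ rate for CGM, the sharpness hypothesis~\eqref{eqn:model-stable}, and the sketch reconstruction bound of Theorem~\ref{thm:sketch-err-body}. First I would invoke the standard CGM analysis~\cite[Thm.~1]{Jag13:Revisiting-Frank-Wolfe}: with step size $\eta_t = 2/(t+2)$, the iterates of~\eqref{eqn:CGM-primal-init}--\eqref{eqn:CGM-primal-update} satisfy $f(\mathcal{A}\mtx{X}_t) - f(\mathcal{A}\mtx{X}_{\star}) \le 2C/(t+2)$, where $C$ is the curvature constant of~\eqref{eqn:model-prob}. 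Each $\mtx{X}_t$ is feasible (a convex combination of points in the trace-norm ball), so applying~\eqref{eqn:model-stable} at $\mtx{X} = \mtx{X}_t$ gives $\kappa \fnorm{\mtx{X}_t - \mtx{X}_{\star}}^{\nu} \le 2C/(t+2)$, hence
$$
\fnorm{\mtx{X}_t - \mtx{X}_{\star}} \le \left( \frac{2C\kappa^{-1}}{t+2} \right)^{1/\nu}.
$$

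Next I would control the reconstruction error and transfer the CGM rate to $\hat{\mtx{X}}_t$. By the loop invariants~\eqref{eqn:CGM-sketch-loop}, the pair $(\mtx{Y}_t, \mtx{W}_t)$ maintained by SketchyCGM is exactly a sketch of $\mtx{X}_t$ of the form~\eqref{eqn:test-matrices}--\eqref{eqn:sketch}, so Theorem~\ref{thm:sketch-err-body} yields $\Expect \fnorm{\mtx{X}_t - \hat{\mtx{X}}_t} \le 3\sqrt{2}\, \fnorm{\mtx{X}_t - [\mtx{X}_t]_r}$. Because $\mtx{X}_{\star}$ is feasible with $\rank(\mtx{X}_{\star}) \le r$ and $[\mtx{X}_t]_r$ is a best rank-$r$ Frobenius approximation of $\mtx{X}_t$, we have $\fnorm{\mtx{X}_t - [\mtx{X}_t]_r} \le \fnorm{\mtx{X}_t - \mtx{X}_{\star}}$. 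The triangle inequality and monotonicity of expectation then give
$$
\Expect \fnorm{\hat{\mtx{X}}_t - \mtx{X}_{\star}} \le \Expect \fnorm{\hat{\mtx{X}}_t - \mtx{X}_t} + \fnorm{\mtx{X}_t - \mtx{X}_{\star}} \le (3\sqrt{2} + 1)\,\fnorm{\mtx{X}_t - \mtx{X}_{\star}},
$$
and since $3\sqrt{2} + 1 < 6$, substituting the first-paragraph bound finishes the proof.

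This argument is mostly assembly of earlier results, so the work is bookkeeping: one must check that the curvature constant $C$ invoked from~\cite{Jag13:Revisiting-Frank-Wolfe} is the same object named in the statement, that~\eqref{eqn:model-stable} is only ever used at feasible points, and that the in-expectation bound of Theorem~\ref{thm:sketch-err-body} composes cleanly with the deterministic CGM rate. The one genuinely load-bearing observation is the inequality $\fnorm{\mtx{X}_t - [\mtx{X}_t]_r} \le \fnorm{\mtx{X}_t - \mtx{X}_{\star}}$, which converts the data-dependent tail $\fnorm{\mtx{X}_t - [\mtx{X}_t]_r}$ into a quantity the sharpness condition can bound; it relies essentially on $\rank(\mtx{X}_{\star}) \le r$. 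No Jensen-type subtlety arises because the triangle inequality is applied to the Frobenius norm itself rather than to a power of it; the exponent $1/\nu$ only ever acts on the scalar CGM rate.
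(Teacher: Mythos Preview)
Your proposal is correct and follows essentially the same route as the paper's proof: both combine the CGM rate $f(\mathcal{A}\mtx{X}_t)-f(\mathcal{A}\mtx{X}_\star)\le 2C/(t+2)$, the sharpness hypothesis~\eqref{eqn:model-stable}, the sketch bound of Theorem~\ref{thm:sketch-err-body}, the key inequality $\fnorm{\mtx{X}_t-[\mtx{X}_t]_r}\le\fnorm{\mtx{X}_t-\mtx{X}_\star}$ from $\rank(\mtx{X}_\star)\le r$, and the triangle inequality, then round $3\sqrt{2}+1$ up to $6$. The only difference is the order in which the ingredients are applied, which is immaterial.
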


\noindent
\ifdefined \hassupplement {
See Appendix~\ref{app:theory} for the proofs of these results.}
\else {
See the supplement 
for the
proofs of these results.
} \fi

\subsection{SketchyCGM for PSD Optimization}
\label{sec:CGM-psd}
Next, we present a generalization of the convex phase
retrieval problem~\eqref{eqn:phase-convex}.
Consider the convex template
\begin{equation}  \label{eqn:model-prob-psd}
\underset{\mtx{X} \in \C^{n\times n}}{\minimize} \ f(\mathcal{A}\mtx{X})
\quad\subjto\quad \trace \mtx{X} \leq \alpha, \ \mtx{X} \psdge \mtx{0}.
\end{equation}
As before, $\mathcal{A} : \C^{n \times n} \to \C^d$ is a linear map,
and $f : \C^d \to \R$ is a differentiable convex function.

It is easy to adapt SketchyCGM to handle~\eqref{eqn:model-prob-psd}
instead of~\eqref{eqn:model-prob}.
To sketch the complex psd matrix variable,
we follow the approach described in~\cite[Sec.~7.3]{TYUC17:Randomized-Single-View}.
We also make small changes to the SketchyCGM iteration.
Replace the computation~\eqref{eqn:CGM-sketch-dir} with
$$
\begin{aligned}
(\lambda_t, \vct{u}_t) & = \texttt{MinEig}(\mathcal{A}^* (\grad f(\vct{z}_t))); \\
\vct{h}_t & = \begin{cases}
	\mathcal{A}(\alpha\vct{u}_t \vct{u}_t^*), & \lambda_t \leq 0 \\
	\vct{0}, & \text{otherwise}.
\end{cases}
\end{aligned}
$$
\texttt{MinEig} returns the minimum eigenvalue $\lambda_t$
and an associated eigenvector $\vct{u}_t$
of a conjugate symmetric matrix.
This variant behaves the same as SketchyCGM.

\section{COMPUTATIONAL EVIDENCE}
\label{sec:numerics}

In this section, we demonstrate that SketchyCGM is a practical
algorithm for convex low-rank matrix optimization.
We focus on phase retrieval problems because they provide a
dramatic illustration of the power of storage-optimal convex optimization.
\ifdefined \hassupplement {
Appendix~\ref{app:more-numerics} adduces additional examples,
including some matrix completion problems.
} \else {
The supplementary material adduces additional examples,
including some matrix completion problems.
} \fi

\subsection{Synthetic Phase Retrieval Problems}
\label{sec:synthetic-PR}

To begin, we show that our approach to solving
the convex phase retrieval problem~\eqref{eqn:phase-convex}
has better memory scaling than other convex optimization methods.

We compare five convex optimization algorithms:
the classic proximal gradient method (PGM)~\cite{Roc76:Monotone-Operators};
the Auslender--Teboulle (AT) accelerated method~\cite{AT06:Interior-Gradient};
the classic CGM algorithm~\cite{Jag13:Revisiting-Frank-Wolfe};
a storage-efficient CGM variant (ThinCGM)~\cite{YHC15:Scalable-Convex} based on low-rank SVD updating; and
the psd variant of SketchyCGM from \S\ref{sec:CGM-psd} with rank parameter $r = 1$.

All five methods solve~\eqref{eqn:model-prob-psd} reliably.
The proximal methods (PGM and AT) perform a full eigenvalue decomposition
of the iterate at each step, but they can be accelerated by adaptively
choosing the number of eigenvectors to compute.
The methods based on CGM only need the top eigenvector,
so they perform less arithmetic per iteration.

To compare the storage costs of the five algorithms,
let us consider a synthetic phase retrieval problem.
We draw a vector $\vct{x}_{\natural} \in \C^n$ from
the complex standard normal distribution.
Then we acquire $d = 10 n$ phaseless measurements~\eqref{eqn:phase-retrieval-meas},
corrupted with independent Gaussian noise so that the SNR is 20 dB.
The measurement vectors $\vct{a}_i$ derive from a coded
diffraction pattern; 
\ifdefined \hassupplement {
see \S\ref{sec:synthetic-app} for details.}
\else {
see the supplement 
for details.} \fi
We solve the convex problem~\eqref{eqn:phase-convex}
with $\alpha$ equal to the average of the measurements $b_i$;
see~\cite[Sec.~II]{YHC15:Scalable-Convex}.
Then we compute a top eigenvector $\vct{x}_{\star}$ of the solution.  

Figure~\ref{fig:SpaceConv}(\textsc{a}) displays storage costs for each algorithm
as the signal length $n$ increases.
\ifdefined \hassupplement {
See \S\ref{sec:synthetic-app} for the numerical data.} \fi
We approximate memory usage by
reporting the total workspace allocated by MATLAB for the algorithm.
PGM, AT, and CGM have static allocations,
but 
they use a matrix variable of size $n^2$.
ThinCGM attempts to maintain a low-rank
approximation of the decision variable, but the rank increases
steadily, so the algorithm fails after $n = 10^5$.
In contrast, SketchyCGM has a static memory allocation
of $\Theta(n)$.  It already offers the best memory
footprint for $n = 10^2$, and it still works when $n = 10^6$.

In fact, SketchyCGM can tackle even larger problems.
We were able to reconstruct an image
with 
$n = 3,\!264 \times 2,\!448 \approx 7.99 \cdot 10^6$ pixels,
treated as a vector $\vct{x}_{\natural} \in \C^n$, 
given $d = 20n$ noiseless coded diffraction measurements, as above.
Figure~\ref{fig:SpaceConv}(\textsc{b}) plots the convergence
of the relative error:
$
\min_{\phi \in \R} \norm{\smash{ \econst^{\mathrm{i} \phi} \hat{\vct{x}}_t - \vct{x}_{\natural}}} / \norm{\vct{x}_{\natural}}$,
where $\hat{\vct{x}}_t$ is a top eigenvector of the SketchyCGM iterate $\hat{\mtx{X}}_t$.
After 150 iterations,
the algorithm produced an image 
with relative error 
$0.0290$ and with PSNR 36.19 dB.
Thus, we can solve~\eqref{eqn:model-prob-psd}
when the psd matrix variable $\mtx{X}$ has $6.38 \cdot 10^{13}$ complex entries!

As compared to other convex optimization algorithms,
the main weakness of SketchyCGM is the $\mathcal{O}(\eps^{-1})$
iteration count.  Some algorithms, such as AT, can achieve $\mathcal{O}(\eps^{-1/2})$
iteration count, but they are limited to smaller problems.
Closing this gap is an important question for future work.

\begin{figure*}[t!]
    \centering
    \begin{subfigure}{0.6\textwidth}
        \centering
        \includegraphics[height=4.75cm]{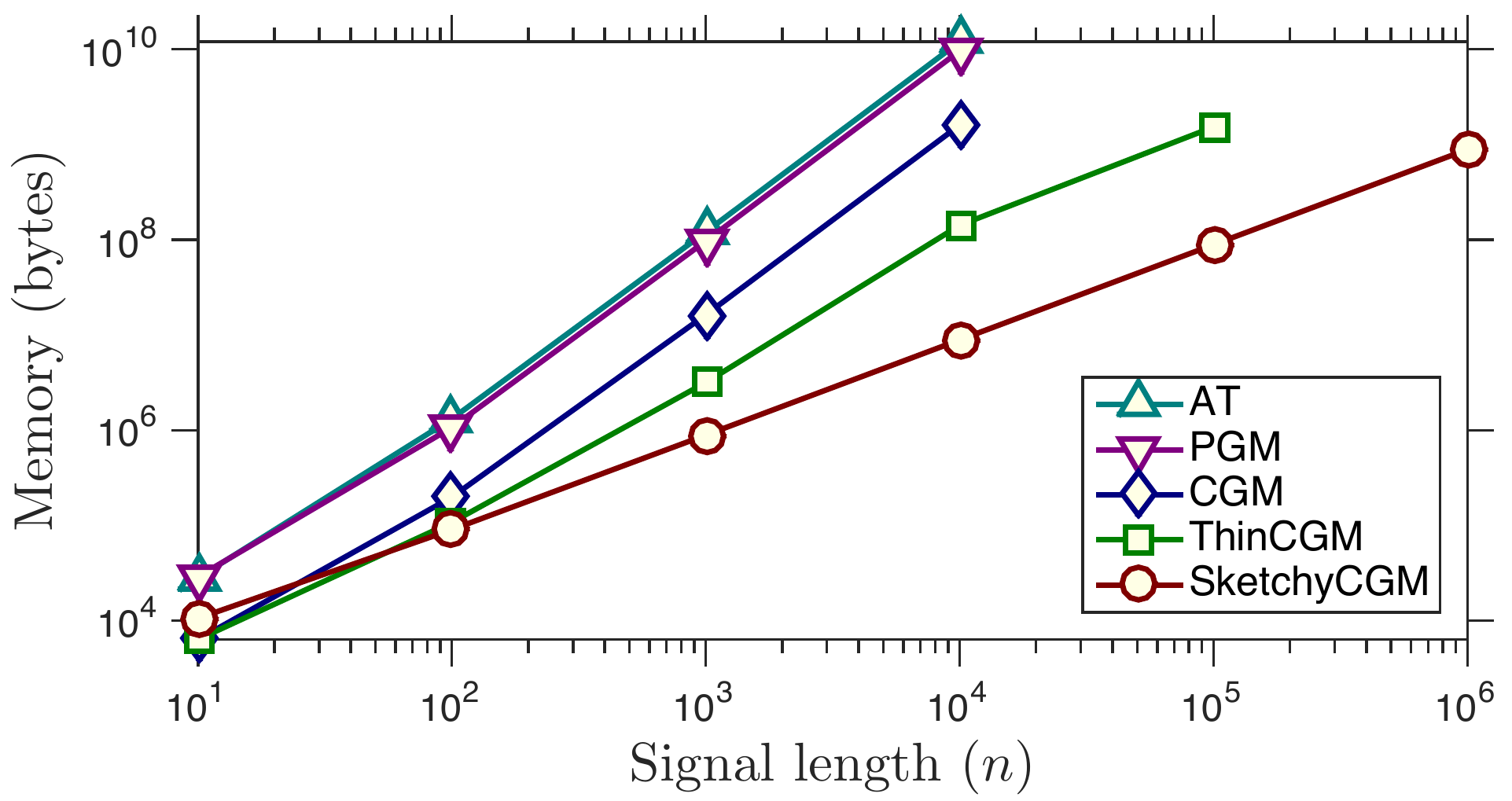}
        \caption{Memory usage for five algorithms}
    \end{subfigure}
    \hfill
    \begin{subfigure}{0.35\textwidth}
        \centering
        \includegraphics[height=4.75cm]{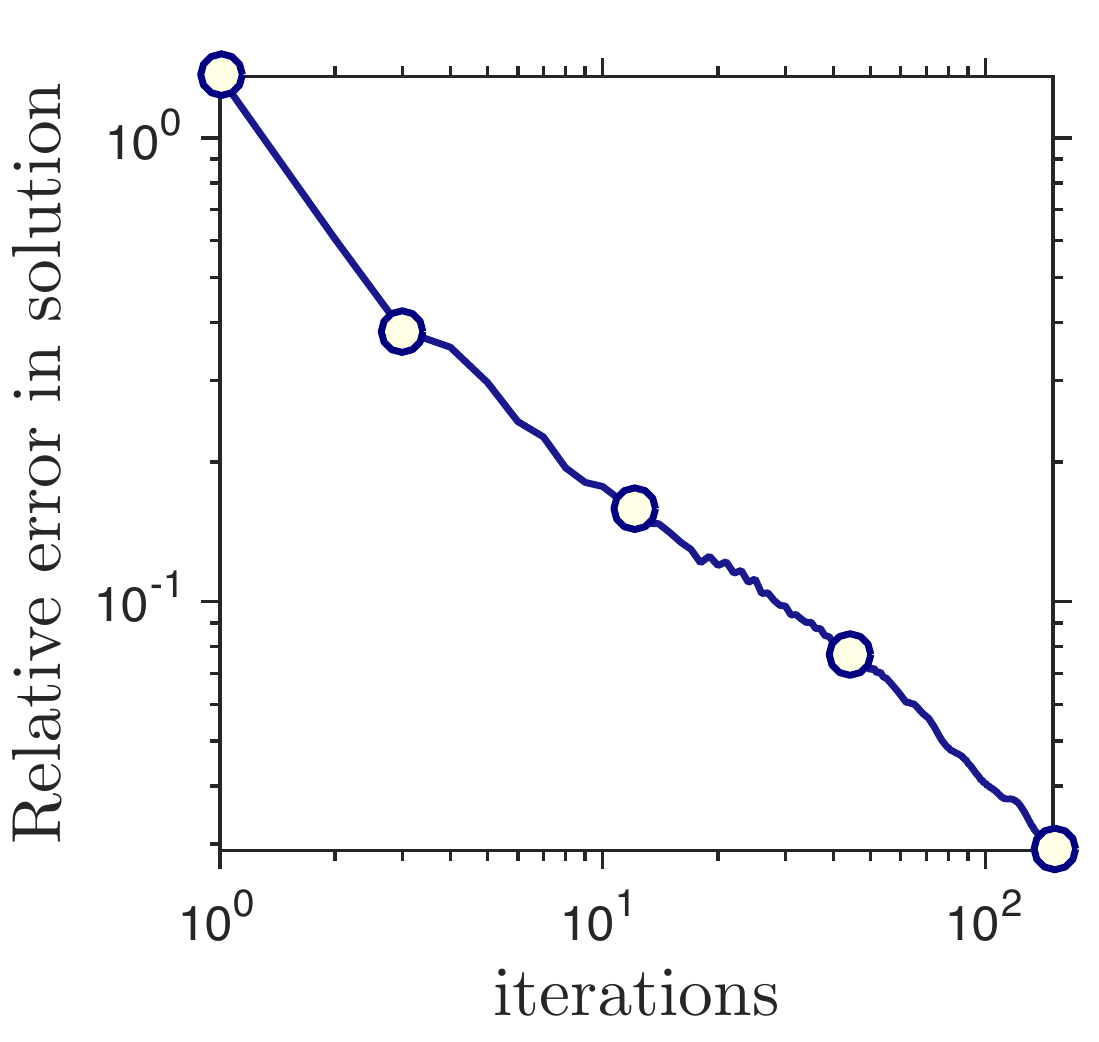}
        \caption{Convergence on largest problem}
    \end{subfigure}
\caption{\textsl{Memory Usage and Convergence.}  \textsc{(a)}
Memory scaling for five convex optimization algorithms applied
to a synthetic instance of the convex phase retrieval problem~\eqref{eqn:phase-convex}.
\textsc{(b)} Relative $\ell_2$ error achieved by SketchyCGM for convex phase retrieval
of a synthetic signal of length $n = 8 \cdot 10^6$.
See \S\ref{sec:fourier-ptychography-problem} for further details.}
\vspace{1pc}
\label{fig:SpaceConv} 
\end{figure*}

\begin{figure*}[t!]
    \centering
    \begin{subfigure}{0.3\textwidth}
        \centering
	\begin{tikzpicture}
    		\node[anchor=south west,inner sep=0] (image) at (0,0) {\includegraphics[width=\textwidth]{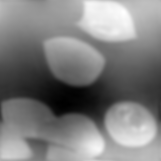}};
    		\begin{scope}[x={(image.south east)},y={(image.north west)}]
        			\draw[red!70] (0.50,0.75) rectangle (0.80,0.90);
        			\draw[red!70] (0.70,0.14) rectangle (0.95,0.38);
    		\end{scope}
	\end{tikzpicture}
        \caption{SketchyCGM}
    \end{subfigure}
    ~
    \begin{subfigure}{0.3\textwidth}
        \centering
        \includegraphics[width=\textwidth]{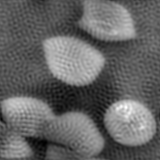}
        \caption{Burer--Monteiro~\cite{BM03:Nonlinear-Programming,HCO+15:Solving-Ptychography}}
    \end{subfigure}
    ~
    \begin{subfigure}{0.3\textwidth}
        \centering
        \includegraphics[width=\textwidth]{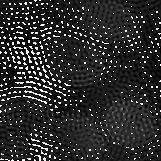}
        \caption{Wirtinger Flow~\cite{CLS15:Phase-Retrieval}}
    \end{subfigure}
\caption{\textsl{Imaging by Fourier Ptychography.}
 Three algorithms for Fourier ptychographic imaging
 via phase retrieval.  Brightness indicates the
 complex phase of a pixel, which roughly corresponds
 with the thickness of the sample.  Only relative
 differences in brightness are meaningful.
 Red boxes mark malaria parasites in blood cells.}
 \label{fig:ptych}
 \end{figure*}

\subsection{Fourier Ptychography}
\label{sec:fourier-ptychography-problem}

Up to now, it has not been possible to attack phase retrieval problems
of a realistic size by solving the convex formulation~\eqref{eqn:phase-convex}.
As we have shown, current convex optimization algorithms cannot achieve scale.
Instead, many researchers apply nonconvex heuristics to solve phase retrieval
problems~\cite{Fie82:Phase-Retrieval,
NJS15:Phase-Retrieval,CLS15:Phase-Retrieval,HCO+15:Solving-Ptychography}. 
These heuristics can produce reasonable solutions,
but they require extensive tuning and have limited effectiveness.
In this section, we demonstrate that, without any modification, SketchyCGM
can solve a phase retrieval problem from an application in microscopy.
Furthermore, it produces an image that is superior
to the results obtained using major nonconvex heuristics.

We study a phase retrieval problem that arises
from an imaging modality called Fourier ptychography (FP)~\cite{HCO+15:Solving-Ptychography}.
The authors of~\cite{HCO+15:Solving-Ptychography} provided measurements
of a slide containing human blood cells from a working FP system.
We treat the sample as an image with $n = 25,\!600$ pixels, which
we represent as a vector $\vct{x}_{\natural} \in \C^n$.
The goal is to reconstruct the \emph{phase} of the image,
which roughly corresponds with the thickness of the sample
at a given location.

The data consists of 29 illuminations, each containing $6,\!400$ pixels.
The number of measurements $d = 185,\!600$.
The measurement vectors $\vct{a}_i$ are obtained
from windowed discrete Fourier transforms.
We can formulate the problem of reconstructing 
the sample $\vct{x}_{\natural}$ using the convex phase retrieval template~\eqref{eqn:phase-convex}
with the parameter $\alpha = 1,\!400$.  In this instance,
the psd matrix variable $\mtx{X} \in \C^{n \times n}$ 
has $6.55 \cdot 10^8$ complex entries.

To solve~\eqref{eqn:phase-convex},
we run the SketchyCGM variant from \S\ref{sec:CGM-psd}
with the rank parameter $r = 1$ for $10,\!000$ iterations. 
We factor the rank-one matrix output to obtain an approximation $\vct{x}_{\star}$
of the sample.  Figure~\ref{fig:ptych}(\textsc{a}) displays the
phase of the reconstruction $\vct{x}_{\star}$.

Figure~\ref{fig:ptych} also includes comparisons with two nonconvex heuristics.
The authors of~\cite{HCO+15:Solving-Ptychography} provided a reconstruction
obtained via the Burer--Monteiro method~\cite{BM03:Nonlinear-Programming}.
We also applied Wirtinger Flow~\cite{CLS15:Phase-Retrieval} with the recommended parameters.
SketchyCGM yields a smooth and detailed phase reconstruction.
Burer--Monteiro produces severe artifacts, which suggest an unphysical
oscillation in the thickness of the sample.  Wirtinger Flow fails completely.
These results are consistent with~\cite[Fig.~4]{HCO+15:Solving-Ptychography},
which indicates 5--10 dB improvement of convex optimization over heuristics.

The quality of phase reconstruction can be essential for scientific purposes.
In this particular example, some of the blood cells are infected with
malaria parasites (Figure~\ref{fig:ptych}(\textsc{a}), red boxes).  Diagnosis is
easier when the visual acuity of the reconstruction is high.

\ifdefined \hassupplement {
Appendix~\ref{app:PR} contains additional details and results on Fourier ptychographic imaging.} 
\else {
The supplement 
contains further details and results on Fourier ptychographic imaging.} \fi

\section{DISCUSSION}

We have shown that it is possible to construct a low-rank
approximate solution to a large-scale matrix optimization problem
by sketching the decision variable.
Let us contrast our approach against other low-storage techniques for large-scale optimization.

\myparagraph{Sketchy Decisions.}
To the best of our knowledge, there are no direct precedents
for the idea and realization of an optimization algorithm that sketches the decision variable.
This work does partake in a broader vision that randomization can be used to design numerical
algorithms~\citep{HMT11:Finding-Structure,Mah11:Randomized-Algorithms,Woo14:Sketching-Tool}.

Researchers have considered \textbf{sketching the problem data}
as a way to reduce the size of a problem specification in exchange for additional error.
This idea dates to the paper of Sarl{\'o}s~\citep{Sar06:Improved-Approximation};
see also~\citep{Mah11:Randomized-Algorithms,Woo14:Sketching-Tool,PW15:Randomized-Sketches}.
There are also several papers~\cite{PW15:Newton-Sketch,EM15:Convergence-Rates,ABH16:Second-Order-Stochastic}
in which researchers try to improve the computational or storage footprint of convex optimization methods by \textbf{sketching internal variables}, such as Hessians.

None of these approaches address the core issue that concerns us:
the decision variable may require much more storage than the solution.

\myparagraph{Dropping Nonconvexity.}
We have already discussed a major trend in which researchers develop algorithms
that attack \textbf{nonconvex reformulations} of a problem.
For example, see~\cite{BM03:Nonlinear-Programming,JNS13:Low-Rank-Matrix, 
BKS16:Dropping-Convexity, BVB16:Nonconvex-Burer-Monteiro}.
The main advantage is to reduce the size of the decision variable;
some methods also have the ancillary benefit of rapid local convergence.
On the other hand, these algorithms are provably correct only under
strong statistical assumptions on the problem data.

\myparagraph{Prospects.}
Our work shows that convex optimization need not have
high storage complexity for problems with a compact specification
and a simple solution.  In particular, for low-rank matrix optimization,
storage is no reason to drop convexity.

It has not escaped our notice that the specific pairing of sketching and CGM
that we have postulated immediately suggests a possible mechanism for solving
other structured convex programs using optimal storage.

\subsubsection*{Acknowledgments}

JAT and MU were supported in part by ONR Awards N00014-11-1-0025 and N00014-17-1-2146
and the Gordon \& Betty Moore Foundation.
VC and AY were supported in part by the European Commission under
Grant ERC Future Proof, SNF 200021-146750, and SNF CRSII2-147633.
The authors thank Dr. Roarke Horstmeyer for sharing the blood cell data. 

\clearpage
\appendix

\section{Convergence of SketchyCGM}
\label{app:theory}

In this appendix, we develop a basic convergence theory
for the SketchyCGM algorithm.  We focus on situations
where the low-rank estimates produced by SketchyCGM converge
to a low-rank solution of the model problem~\eqref{eqn:model-prob}.

\paragraph{Preliminaries.}
We rely on the following standard convergence result for CGM.

\begin{fact}[CGM: Convergence rate for objective] \label{fact:CGM-objective}
Let $\mtx{X}_{\star}$ be an arbitrary solution to~\eqref{eqn:model-prob}.
For each $t \geq 0$, the matrix $\mtx{X}_t$ given
by the CGM iteration~\eqref{eqn:CGM-primal-init}--\eqref{eqn:CGM-primal-update} satisfies
$$
f(\mathcal{A}\mtx{X}_t) - f(\mathcal{A}\mtx{X}_{\star})
	\leq \frac{2 C}{2 + t}.
$$
The number $C$
is called the \emph{curvature constant}~\citep[Eqn.~(3)]{Jag13:Revisiting-Frank-Wolfe}
of the problem~\eqref{eqn:model-prob}.
\end{fact}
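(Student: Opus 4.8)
The plan is to prove the standard $\mathcal{O}(1/t)$ convergence rate by tracking the suboptimality
\[
h_t := f(\mathcal{A}\mtx{X}_t) - f(\mathcal{A}\mtx{X}_{\star})
\]
and showing that it obeys a contractive recursion, which I then solve by induction. Throughout, write $g := f \circ \mathcal{A}$, a convex, differentiable function whose gradient is $\grad g(\mtx{X}) = \mathcal{A}^*(\grad f(\mathcal{A}\mtx{X}))$, and let $\mathcal{D} = \{ \mtx{X} : \norm{\mtx{X}}_{S_1} \leq \alpha \}$ denote the feasible set of~\eqref{eqn:model-prob}. The curvature constant $C$ is precisely the device that converts the linear local model into a usable global bound.

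First I would record the quadratic upper bound supplied by $C$. By its definition~\cite[Eqn.~(3)]{Jag13:Revisiting-Frank-Wolfe}, for any $\mtx{X}, \mtx{H} \in \mathcal{D}$ and step $\eta \in [0,1]$, the point $\mtx{X}^{+} = (1-\eta)\mtx{X} + \eta \mtx{H}$ satisfies
\[
g(\mtx{X}^{+}) \leq g(\mtx{X}) + \eta \, \ip{\mtx{H} - \mtx{X}}{\grad g(\mtx{X})} + \tfrac{1}{2}\eta^2 C.
\]
Next I would argue that the CGM direction makes the linear term as negative as possible. The update $\mtx{H}_t = -\alpha \vct{u}_t \vct{v}_t^*$, built from the leading singular vectors of $\grad g(\mtx{X}_t)$, is exactly a minimizer of the linear map $\mtx{H} \mapsto \ip{\mtx{H}}{\grad g(\mtx{X}_t)}$ over $\mathcal{D}$, because this functional is minimized over a Schatten-$1$ ball at a rank-one matrix aligned with the top singular vectors of its argument (a standard duality computation). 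Since $\mtx{X}_{\star} \in \mathcal{D}$, optimality of $\mtx{H}_t$ followed by the convexity gradient inequality gives
\[
\ip{\mtx{H}_t - \mtx{X}_t}{\grad g(\mtx{X}_t)} \leq \ip{\mtx{X}_{\star} - \mtx{X}_t}{\grad g(\mtx{X}_t)} \leq g(\mtx{X}_{\star}) - g(\mtx{X}_t) = -h_t.
\]
Substituting this into the curvature bound at $\mtx{X} = \mtx{X}_t$, $\mtx{H} = \mtx{H}_t$, $\eta = \eta_t$ and subtracting $g(\mtx{X}_{\star})$ yields the scalar recursion $h_{t+1} \leq (1 - \eta_t) h_t + \tfrac{1}{2}\eta_t^2 C$.

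Finally I would solve this recursion with $\eta_t = 2/(t+2)$ by induction on the claim $h_t \leq 2C/(t+2)$. The base case is handled by $\eta_0 = 1$, which annihilates the $h_0$ term and gives $h_1 \leq \tfrac{1}{2}C \leq \tfrac{2}{3}C$, matching the bound at $t=1$. For the inductive step, inserting $\eta_t = 2/(t+2)$ and the hypothesis into the recursion produces $h_{t+1} \leq 2C(t+1)/(t+2)^2$, so the target $2C/(t+3)$ follows from the elementary inequality $(t+1)(t+3) \leq (t+2)^2$. I expect no serious obstacle: the only delicate points are verifying that $\texttt{MaxSingVec}$ genuinely solves the linear minimization over the trace-norm ball and pinning down the boundary. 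The displayed bound at $t=0$ reads $h_0 \leq C$; since the substantive content is the $t \geq 1$ rate produced above, this boundary instance can be absorbed into the normalization of the curvature constant, exactly as in~\cite{Jag13:Revisiting-Frank-Wolfe}.
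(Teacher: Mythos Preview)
The paper does not prove this statement: it is recorded as a \emph{Fact} and attributed to~\cite{Jag13:Revisiting-Frank-Wolfe}, with no accompanying argument. Your proposal reproduces precisely the standard proof from that reference---curvature upper bound, linear-minimization optimality of $\mtx{H}_t$ over the Schatten-$1$ ball, convexity to get $\ip{\mtx{H}_t - \mtx{X}_t}{\grad g(\mtx{X}_t)} \le -h_t$, and the induction on $h_{t+1} \le (1-\eta_t)h_t + \tfrac{1}{2}\eta_t^2 C$ with $\eta_t = 2/(t+2)$---so there is nothing to compare; you have simply filled in what the paper outsourced.

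One small caveat worth tightening: your handling of the $t=0$ case is a bit loose. The displayed bound at $t=0$ reads $f(\mathcal{A}\mtx{X}_0) - f(\mathcal{A}\mtx{X}_{\star}) \le C$, and this does not follow from your recursion (which only kicks in at $t \ge 1$). In Jaggi's treatment the bound is typically asserted for $t \ge 1$, and the $t=0$ case is either vacuous by convention or handled by noting that one could take $\mtx{X}_0$ to be an arbitrary vertex and absorb the slack. Since the paper only \emph{uses} Fact~\ref{fact:CGM-objective} in limits (Theorems~\ref{thm:sketch-CGM-unique} and~\ref{thm:cgm-sketch-rate}), the $t=0$ endpoint is immaterial for its purposes, but if you want a self-contained statement you should either restrict to $t \ge 1$ or add a line justifying $h_0 \le C$ directly.
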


\paragraph{Theorem~\ref{thm:low-rank-equivalence}: A basic convergence result.}
First, we study the case where the standard CGM
iteration~\eqref{eqn:CGM-primal-init}--\eqref{eqn:CGM-primal-update}
converges.  In this setting, we can show that SketchyCGM produces
iterates that tend toward a matrix close to the limiting value of CGM.

\begin{proof}[Proof of Theorem~\ref{thm:low-rank-equivalence}] 
According to the triangle inequality,
$$
\Expect \fnorm{ \smash{ \hat{\mtx{X}}_t - \mtx{X}_{\rm cgm} } }
	\leq \Expect \fnorm{ \smash{ \hat{\mtx{X}}_t - \mtx{X}_t } }
	+ \fnorm{ \mtx{X}_t - \mtx{X}_{\rm cgm} }.
$$
The error bound for reconstruction from the sketch, Theorem~\ref{thm:sketch-err-body},
shows that
$$
\begin{aligned}
\Expect \fnorm{ \smash{ \hat{\mtx{X}}_t - \mtx{X}_t } }
	& \leq 3\sqrt{2} \fnorm{ \mtx{X}_t - [\mtx{X}_t]_r } \\
	& \leq 3\sqrt{2} \fnorm{ \mtx{X}_t - [\mtx{X}_{\rm cgm}]_r }.
\end{aligned}
$$
The second inequality holds because $[\mtx{X}_t]$ is a best rank-$r$
approximation of $\mtx{X}_t$ with respect to Frobenius norm, whereas
$[\mtx{X}_{\rm cgm}]_r$ is an undistinguished rank-$r$ matrix.

Combine the last two displays to obtain
$$
\begin{aligned}
\lim_{t\to\infty} & \Expect \fnorm{ \smash{ \hat{\mtx{X}}_t - \mtx{X}_{\rm cgm} } } \\
	&\leq \lim_{t\to\infty} \big( \Expect \fnorm{ \smash{ \hat{\mtx{X}}_t - \mtx{X}_t } } 
	+ \fnorm{ \mtx{X}_t - \mtx{X}_{\rm cgm} } \big) \\
	& \leq \lim_{t\to\infty} 3\sqrt{2} \fnorm{ \mtx{X}_t - [\mtx{X}_{\rm cgm}]_r } \\
	& = 3\sqrt{2} \fnorm{ \mtx{X}_{\rm cgm} - [\mtx{X}_{\rm cgm}]_r }.
\end{aligned}
$$
The second inequality and the last line follow from the limit $\mtx{X}_{t} \to \mtx{X}_{\mathrm{cgm}}$.

If $\rank(\mtx{X}_{\rm cgm}) \leq r$, then 
$\mtx{X}_{\rm cgm} = [\mtx{X}_{\rm cgm}]_r$.  Therefore, we may conclude that
$\Expect \fnorm{ \smash{ \hat{\mtx{X}}_t - \mtx{X}_{\rm cgm} } } \to 0$.
\end{proof}

\paragraph{Theorem~\ref{thm:sketch-CGM-unique}: When all solutions are low rank.}
Next, we examine the situation where all of the solutions
to the model problem~\eqref{eqn:model-prob} have low rank.
In this case, we can show that SketchyCGM produces a
sequence of approximations that approaches the solution
set of the problem.  This point does not follow formally
from Theorem~\ref{thm:low-rank-equivalence} because
CGM need not produce a convergent sequence of iterates.

\begin{proof}[Proof of Theorem~\ref{thm:sketch-CGM-unique}]
As a consequence of the triangle inequality,
$$
\Expect \dist_{\rm F}(\hat{\mtx{X}}_t, S_{\star})
	\leq \Expect \fnorm{\smash{\hat{\mtx{X}}_t - \mtx{X}_t}}
	+ \dist_{\rm F}( \mtx{X}_t, S_{\star} ).
$$
We claim that the second term $\dist_{\rm F}(\mtx{X}_t, S_{\star}) \to 0$.
If so, then the first term converges to zero:
\begin{align*}
\Expect \fnorm{\smash{\hat{\mtx{X}}_t - \mtx{X}_t}}
	& \leq 3\sqrt{2} \fnorm{ \mtx{X}_t - [\mtx{X}_t]_r } \\
	& \leq 3\sqrt{2} \dist_{\rm F}(\mtx{X}_t, S_{\star})
	\to 0.
\end{align*}
The first inequality is Theorem~\ref{thm:sketch-err-body}.
The second bound holds because $[\mtx{X}_t]_r$ is
a best rank-$r$ approximation of $\mtx{X}_t$, while
$S_{\star}$ is an unremarkable set of rank-$r$ matrices.
These observations complete the proof.

Let us turn to the claim.
Abbreviate the objective function of~\eqref{eqn:model-prob}
as $g = f \circ \mathcal{A}$.  The continuous function $g$
attains its minimal value $g_{\star}$ on the compact feasible
set of~\eqref{eqn:model-prob}.  The standard convergence result for CGM,
Fact~\ref{fact:CGM-objective}, shows that $g(\mtx{X}_t) \to g_{\star}$.
Now, fix a number $\delta > 0$.  Define 
\begin{gather*}
E = \big\{ \mtx{X} \in \R^{m \times n} : \norm{\mtx{X}}_{S_1} \leq \alpha;\
	\dist(\mtx{X}, S_{\star}) \geq \delta \big\}; \\
v = \inf\{ g(\mtx{X}) : \mtx{X} \in E \}.
\end{gather*}
If $E$ is empty, then $v = + \infty$.  Otherwise,
the continuous function $g$ attains the value $v$ on the compact set $E$.
In either case, $v > g_{\star}$ because $E$ contains no optimal point of~\eqref{eqn:model-prob}.
Since $g(\mtx{X}_t) \to g_{\star}$, we must have $g(\mtx{X}_t) < v$
for all sufficiently large $t$.  Therefore, $\mtx{X}_t \notin E$
for large $t$.  We conclude that
$\dist_{\rm F}(\mtx{X}_t, S_{\star}) < \delta$ for large $t$,
as required.
\end{proof}

\paragraph{Theorem~\ref{thm:cgm-sketch-rate}: Rate of convergence.}
Finally, we identify a setting where we can bound the rate
at which the estimates produced by SketchyCGM converge to
an optimal point of~\eqref{eqn:model-prob}.  To do so, we
assume that the optimal point is stable in the sense that
feasible perturbations away from optimality are reflected
in increases in the value of the objective function.

\begin{proof}[Proof of Theorem~\ref{thm:cgm-sketch-rate}]
Since the CGM iterate $\mtx{X}_t$ is feasible for~\eqref{eqn:model-prob},
we can use the stability hypothesis~\eqref{eqn:model-stable} to calculate that
$$
\begin{aligned}
f(\mathcal{A}\mtx{X}_t) - f( \mathcal{A} \mtx{X}_{\star} )
	& \geq \kappa \fnorm{ \mtx{X}_t - \mtx{X}_{\star} }^{\nu} \\
	& \geq \kappa \fnorm{ \mtx{X}_t - [\mtx{X}_t]_r }^{\nu} \\
	& \geq  \kappa \big[ (3\sqrt{2})^{-1} \Expect \fnorm{ \smash{\mtx{X}_t - \hat{\mtx{X}}_t} } \big]^{\nu}.
\end{aligned}
$$
The second inequality holds because $\mtx{X}_{\star}$ has rank $r$,
while $[\mtx{X}_t]_r$ is a better rank-$r$ approximation of $\mtx{X}_t$.
The last inequality follows from Theorem~\ref{thm:sketch-err-body}.

The latter display implies that
\begin{multline*}
\Expect \norm{ \smash{\hat{\mtx{X}}_t - \mtx{X}_{\star}} }
	\leq \Expect \norm{ \smash{\hat{\mtx{X}}_t - \mtx{X}_t} } + \norm{ \mtx{X}_t - \mtx{X}_{\star} } \\
	\leq {} (3\sqrt{2} + 1)\big[ \kappa^{-1} \big( f(\mathcal{A}\mtx{X}_t) - f(\mathcal{A}\mtx{X}_{\star}) \big) \big]^{1/\nu}.
\end{multline*}
To complete the proof, invoke the standard convergence
result 
for CGM, Fact~\ref{fact:CGM-objective}, to bound the quantity
$f(\mathcal{A}\mtx{X}_t) - f(\mathcal{A}\mtx{X}_{\star})$.
Last, simplify the constant.
\end{proof}

\section{More computational evidence}
\label{app:more-numerics}

This appendix elaborates on the phase retrieval experiments
described in \S\ref{sec:numerics}.  It also presents
additional experiments on matrix completion and phase retrieval.

\subsection{Loss functions}

Our experiments involve a number of different models for data,
so we require several elementary loss functions.  Each of these
maps is an extended convex function $\psi : \R \times \R \to \R$.
Define
$$
\begin{aligned}
\psi_{\mathrm{gauss}}(z; b) \ &=\ \tfrac{1}{2} (z - b)^2; \\
\psi_{\mathrm{huber}}(z; b) \ &=\ \huber(z, b); \\
\psi_{\mathrm{logistic}}(z; b) \ &= \ \log(1 + \exp(-bz)); \\
\psi_{\mathrm{poisson}}(z; b) \ &= \ z - b \log z. 
\end{aligned}
$$
The objectives correspond, respectively, to the negative
log-likelihood of observing the data $b$ under a Gaussian
model, a Gauss--Laplace model, a Bernoulli model, and a
Poisson model.

\subsection{Matrix completion}
\label{sec:mc-app}

One principal advantage of SketchyCGM is its flexibility.
Without any modification, the algorithm can provably solve
any convex optimization problem with a smooth objective
and a Schatten 1-norm constraint.  To demonstrate this
point numerically, we present the results of fitting the
benchmark MovieLens 100K and 10M datasets~\cite{HK15:MovieLens-Datasets}
with three different loss functions.

The MovieLens $N$ dataset contains (about) $N$ ratings that
users of a website assigned to a collection of movies.
Suppose that there are $m$ users and $n$ movies.
The data consists of triples $(i, j, b)$ where
$i \in \{1, \dots, m\}$ is a user,
$j \in \{1, \dots, n\}$ is a movie,
and $b \in \{1, 2, 3, 4, 5\}$ is the rating
of movie $j$ by user $i$.

We preprocess the data minimally.  We remove the movies that are not rated by
any user, as well as the users that have not provided any ratings.
To fit a logistic model, we also binarize the ratings by replacing all values
above 3.5 with $1$ and the rest with $-1$.  Thus, the logistic model
seeks a classifier that separates high ratings (4 and 5) from low ratings
(1, 2, and 3).

We can use low-rank matrix completion to fit a model to the MovieLens data.
To see why, introduce a target matrix $\mtx{X}_{\natural} \in \R^{m \times n}$
that tabulates all ratings (known and unknown) of movies by users.
One may imagine that $\mtx{X}_{\natural}$ has low rank because a lot of the variation in
the ratings is explained by the quality of the movie, its genre, and a user's
preference for that genre.  We only observe a subset of the entries of $\mtx{X}_{\natural}$,
and our goal is to predict the remaining entries.

We model this matrix completion problem using the formulation~\eqref{eqn:model-prob}.
Let $E$ be a training set of user--movie pairs,
and let $\vct{b} \in \R^E$ list the associated ratings.
Introduce the linear map $\mathcal{A} : \R^{m \times n} \to \R^E$
where
$$
\mathcal{A} \mtx{X} = \{ x_{ij} : (i, j) \in E \}.
$$
The objective function $f : \R^E \to \R$ has the form
$$
f_{\ell}(\vct{z}) \ = \ \frac{1}{\abs{E}}\sum_{(i, j) \in E} \psi_{\ell}(z_{ij}, b_{ij}),
$$
where $\ell \in \{ \mathrm{gauss}, \mathrm{huber}, \mathrm{logistic} \}$.

Suppose that $\mtx{X}_{\star} \in \R^{m \times n}$ is an estimate
for the target matrix $\mtx{X}_{\natural}$.  Let $E'$ be the test set of
user--movie pairs, with ratings listed in $\vct{b}' \in \R^{E'}$.  Define the
test error
$$
\mathrm{test}_{\ell}(\mtx{X}_{\star}) \ = \ 
	\frac{1}{\abs{E'}} \sum_{(i, j) \in E'} \psi_{\ell}( (\mtx{X}_{\star})_{ij}, b'_{ij} ).
$$
Once again, $\ell \in \{ \mathrm{gauss}, \mathrm{huber}, \mathrm{logistic} \}$.

For the 100K dataset, we use the default \texttt{ub} partition of the data
into training and test sets.  For each loss function, we sweep
$\alpha$ from $3,\!000$ to $10,\!000$ in steps of $500$.
As expected, the rank of a solution $\mtx{X}_{\star}$ of the problem~\eqref{eqn:model-prob}
increases with $\alpha$.  We select the value of $\alpha$ that provides
the best test error after $10,\!000$ iterations of CGM.

A similar procedure applies to the 10M dataset, with the default \texttt{rb} partition.
This time, we sweep $\alpha$ from $50,\!000$ to $250,\!000$ in steps of $25,\!000$.

For each dataset and each loss function, we fit the model \eqref{eqn:model-prob}
with the designated value of $\alpha$ by applying SketchyCGM.
Figures~\ref{f-movielens} and \ref{f-movielens-10m} show how the test error
for the SketchyCGM reconstruction varies as we change the rank parameter $r$ in SketchyCGM.
For the 100K dataset, rank $r = 50$ yields test error similar with
the CGM solution.  For the 10M dataset, rank $r = 200$ yields equivalent performance.

\begin{figure*}[h]
\begin{subfigure}{0.32\textwidth}
\includegraphics[width = \textwidth]{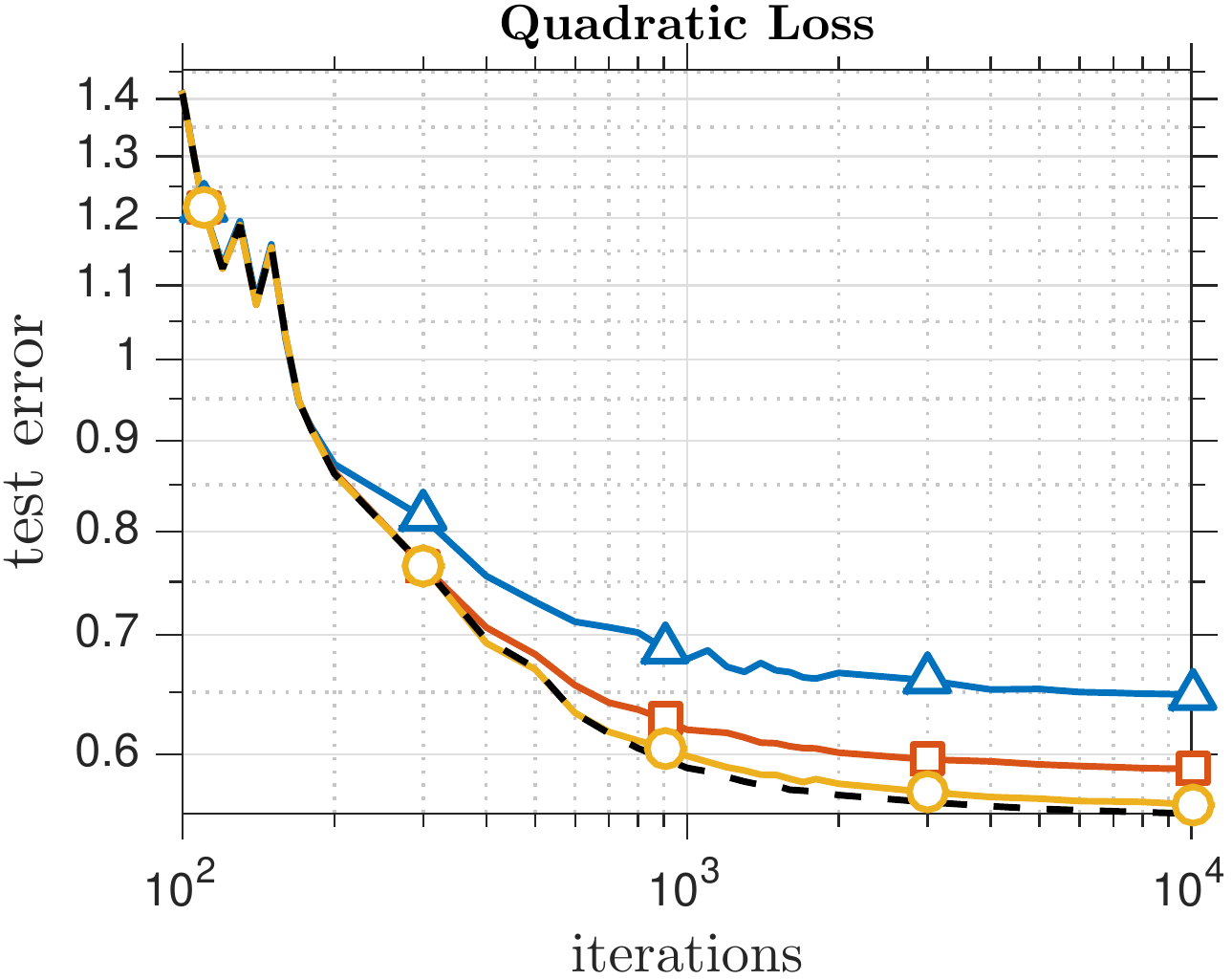}
\end{subfigure}
~
\begin{subfigure}{0.32\textwidth}
\includegraphics[width = \textwidth]{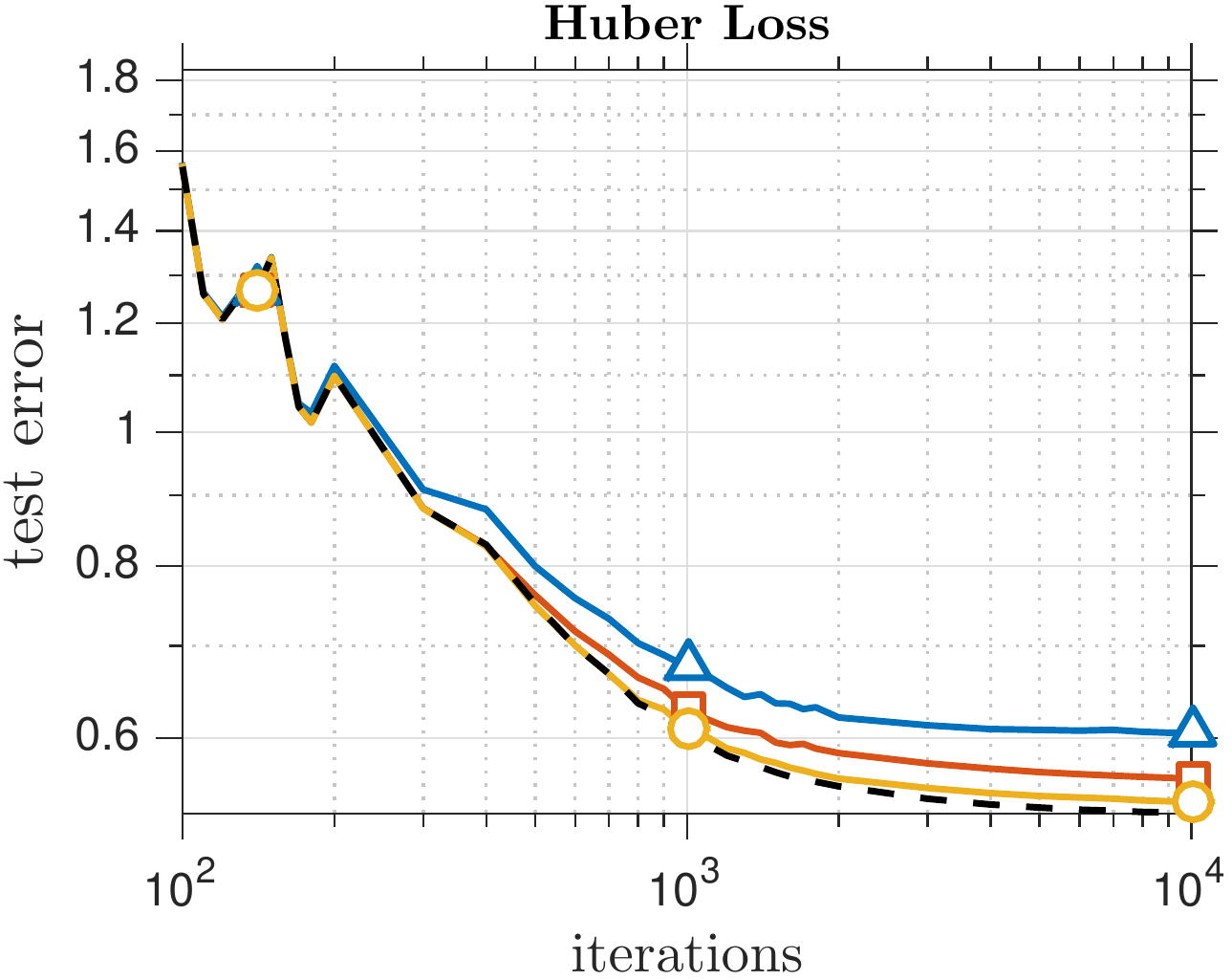}
\end{subfigure}
~
\begin{subfigure}{0.32\textwidth}
\includegraphics[width = \textwidth]{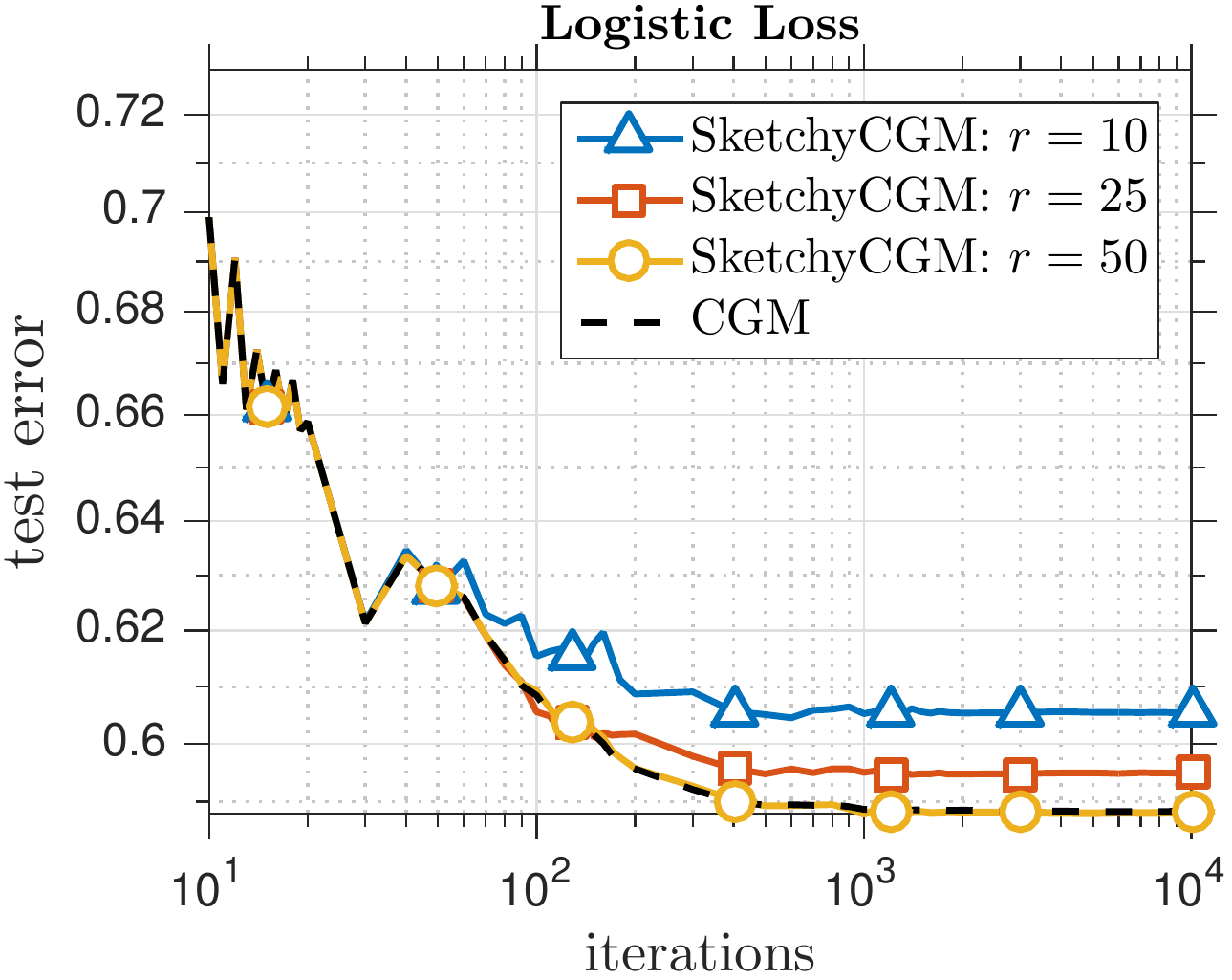}
\end{subfigure}
\caption{\label{f-movielens}\textsl{MovieLens 100K Dataset.}
Convergence of test error for CGM and SketchyCGM solutions to \eqref{eqn:model-prob} with three loss functions.
The parameter $\alpha$ is chosen by cross-validation on the final CGM solution: $\alpha = 7,\!000$ for quadratic loss, $7,\!500$ for Huber loss, and $4,\!500$ for logistic loss.  See \S\ref{sec:mc-app} for details.}
\end{figure*}

\begin{figure*}[h]
\begin{subfigure}{0.32\textwidth}
\includegraphics[width = \textwidth]{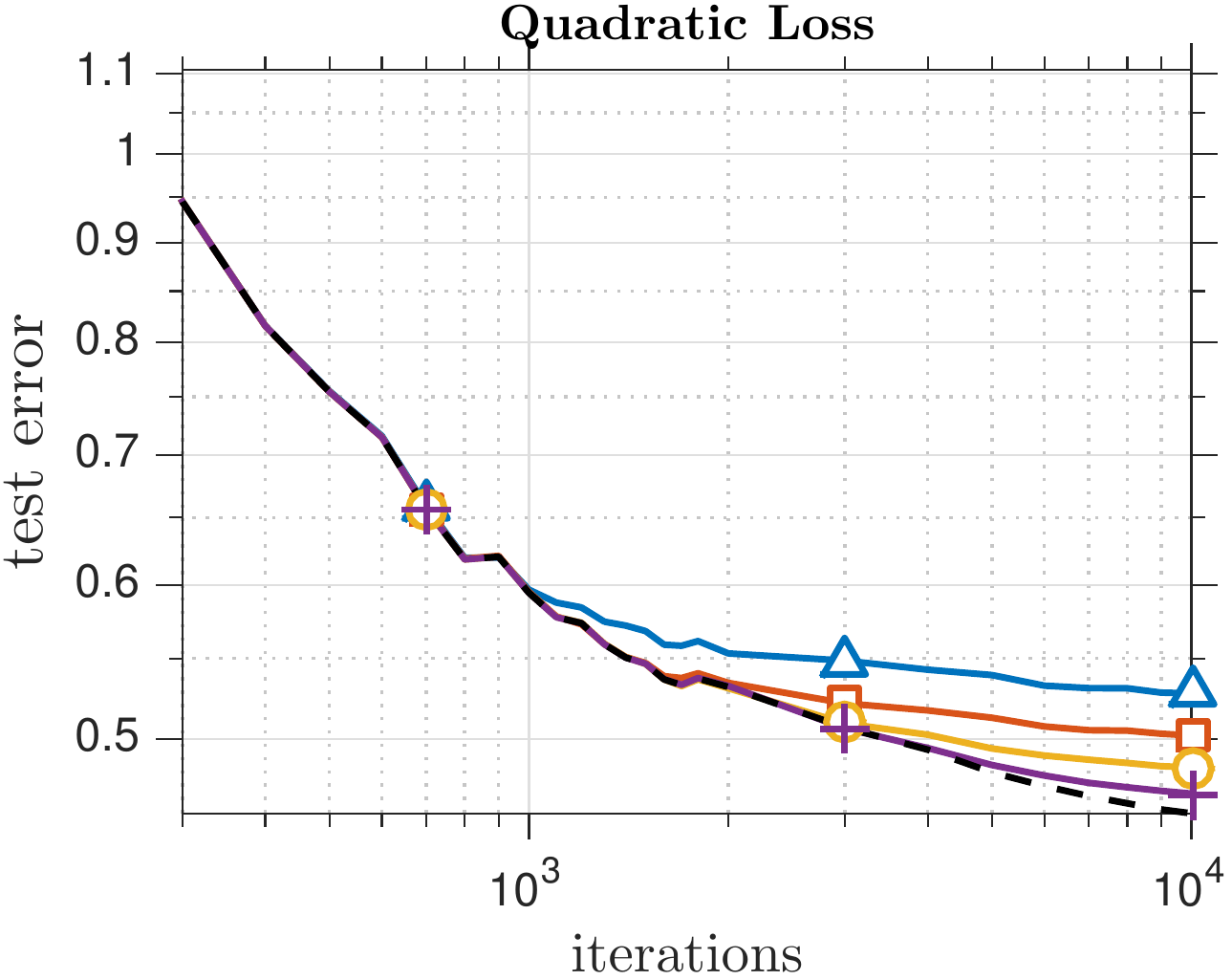}
\end{subfigure}
~
\begin{subfigure}{0.32\textwidth}
\includegraphics[width = \textwidth]{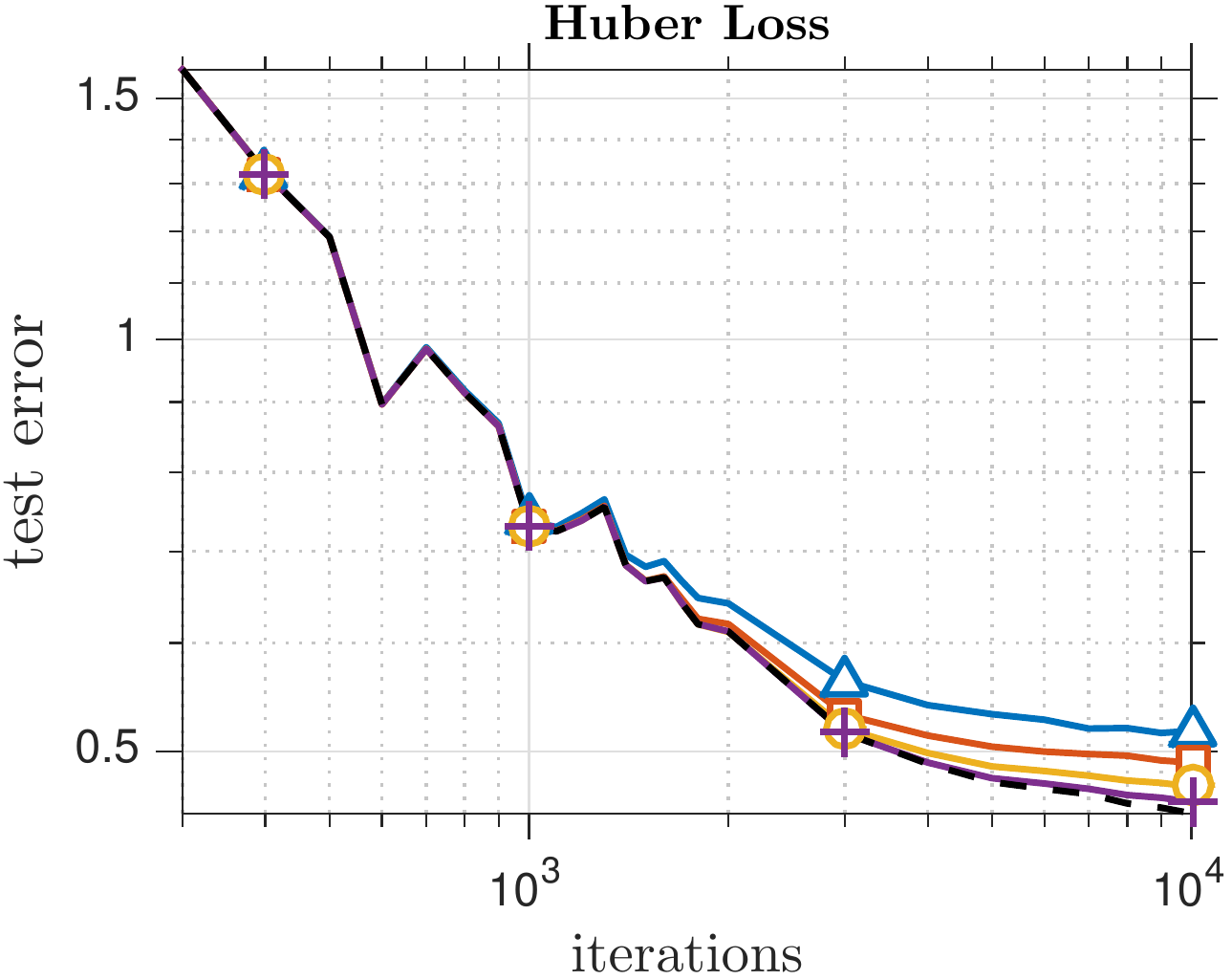}
\end{subfigure}
~
\begin{subfigure}{0.32\textwidth}
\includegraphics[width = \textwidth]{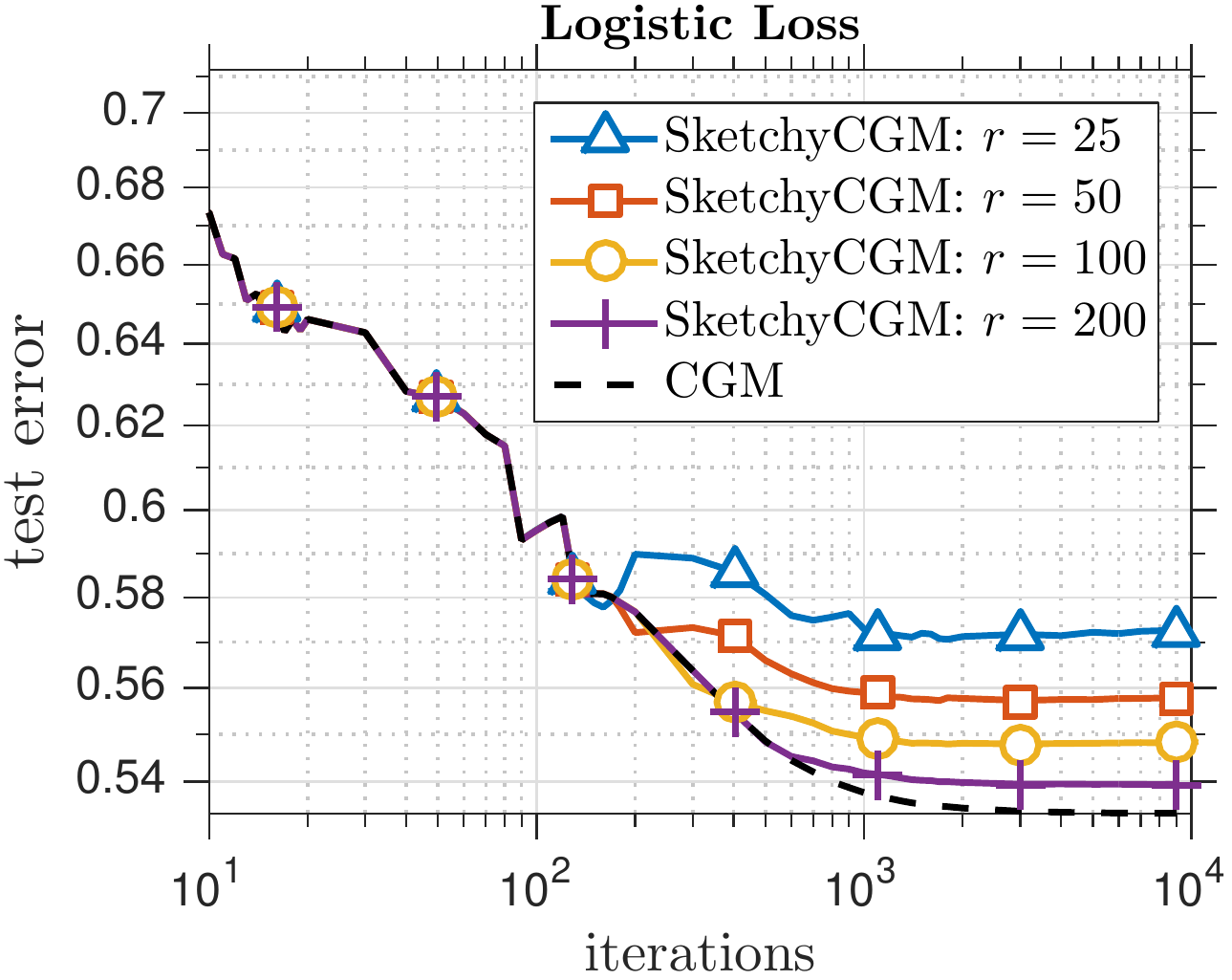}
\end{subfigure}
\caption{\label{f-movielens-10m}\textsl{MovieLens 10M Dataset.}
Convergence of test error for CGM and SketchyCGM solutions to \eqref{eqn:model-prob} with three loss functions.
The parameter $\alpha$ is chosen by cross-validation on the final CGM solution: $\alpha = 150,\!000$ for quadratic and logistic losses, and $175,\!000$ for Huber loss.  See \S\ref{sec:mc-app} for details.}
\end{figure*}

\subsection{Phase retrieval problems}
\label{app:PR}

Section \ref{sec:numerics} showcases the empirical performance of SketchyCGM
on a family of phase retrieval problems.  This section provides details of
these experiments, as well as some related examples.

\subsubsection{Overview}

The setup is the same as in \S\ref{sec:phase-vignette}.
Let $\vct{x}_{\natural} \in \C^n$ be a vector, and suppose
we acquire measurements
\begin{equation} \label{eqn:phase-retrieval-app}
b_i = \abs{\ip{ \vct{a}_i }{ \vct{x}_{\natural} }}^2 + \xi_i
\quad\text{for $i = 1, \dots, d$.}
\end{equation}
We can modify the measurement vectors $\vct{a}_i \in \C^n$
to obtain a range of problems.
We can also adjust the distribution of the noise $\xi_i$.

To model the measurement process~\eqref{eqn:phase-retrieval-app},
it is convenient to form the matrix $\mtx{A} \in \C^{d \times n}$
whose rows are the measurement vectors $\vct{a}_i^*$.
Then define a linear map $\mathcal{A} : \C^{n \times n} \to \C^d$
and its adjoint $\mathcal{A}^* : \C^d \to \C^{n \times n}$ via
\begin{equation} \label{eqn:phase-lin-map}
\begin{aligned}
\mathcal{A}\mtx{X} \ &= \ \diag( \mtx{A} \mtx{X} \mtx{A}^* ); \\
\mathcal{A}^* \vct{z} \ &= \ \mtx{A}^* \diag^*(\vct{z}) \mtx{A}.
\end{aligned}
\end{equation}
The map $\diag : \C^{d \times d} \to \C^d$ extracts the diagonal
of a matrix; $\diag^* : \C^d \to \C^{d\times d}$
maps a vector into a diagonal matrix.
When $\mtx{X}$ is psd, note that $\mathcal{A}\mtx{X} \in \R_+^d$.

We instantiate the convex optimization template~\eqref{eqn:model-prob-psd}
with the linear map~\eqref{eqn:phase-lin-map}
and the objective function
$$
f_{\ell}(\vct{z}) \ = \ \sum\nolimits_{i=1}^d \psi_{\ell}(z_i, b_i).
$$
Here, the parameter $\ell \in \{ \mathrm{gauss}, \mathrm{poisson} \}$. 
Following~\cite[Sec.~II]{YHC15:Scalable-Convex},
we usually set $\alpha = d^{-1} \sum_{i=1}^d b_i$.
We approximate the true vector $\vct{x}_{\natural}$ by means of a
maximum eigenvector $\vct{x}_{\star}$ of a solution $\mtx{X}_{\star}$
to~\eqref{eqn:model-prob-psd}.

\begin{table*}[t]
\centering
\caption{\textsl{Memory usage.}  Approximate memory usage (in bytes) for five convex solvers 
applied to the convex phase retrieval problem~\eqref{eqn:phase-convex},
as shown in Figure~\ref{fig:SpaceConv}.
The dash --- indicates that an algorithm ran out of memory.}
\vspace{0.5pc}

\small
\begin{tabular}{l llllllll}
Signal length $(n)$
	& $\phantom{0.00 \cdot {}}10$
	& $\phantom{0.00 \cdot {}}10^2$
	& $\phantom{0.00 \cdot {}}10^3$
	& $\phantom{0.00 \cdot {}}10^4$
	& $\phantom{0.00 \cdot {}}10^5$
	& $\phantom{0.00 \cdot {}}10^6$
\rule[-1ex]{0pt}{0pt} \\
\hline
\rule{0pt}{2.6ex} \!\!\!\!
AT & $2.82 \cdot 10^4$ & $1.32 \cdot 10^6$ & $1.21 \cdot 10^8$ & $1.20 \cdot 10^{10}$ & --- & --- \\
PGM & $2.90 \cdot 10^4$ & $1.07 \cdot 10^6$ & $9.70 \cdot 10^7$ & $9.61 \cdot 10^9$ & --- & --- \\
CGM & $6.36  \cdot 10^3$ & $1.99 \cdot 10^5$ & $1.63 \cdot 10^7$ & $1.60 \cdot 10^9$ & --- & ---  \\
ThinCGM & $6.36  \cdot 10^3$ & $1.08 \cdot 10^5$ & $3.17 \cdot 10^6$ & $1.42 \cdot 10^8$ & $1.53 \cdot 10^9$ & --- \\
SketchyCGM & $1.06  \cdot 10^4$ & $9.06 \cdot 10^4$ & $8.90 \cdot 10^5$ & $8.88 \cdot 10^6$ & $8.88 \cdot 10^7$ & $8.88 \cdot 10^8$ \\
\hline
\vspace{1.5pc}
\end{tabular}
\label{tab:SpaceEff}
\end{table*}

\begin{figure*}[!t]
    \centering
    \begin{subfigure}{0.3\textwidth}
        \centering
        \includegraphics[width=\textwidth]{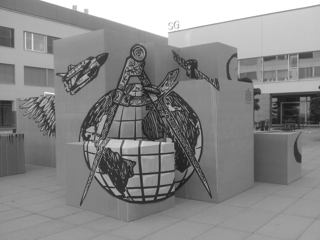}
        \caption{Original image ($240 \times 320$)}
    \end{subfigure}
    \quad
    \begin{subfigure}{0.3\textwidth}
        \centering
        \includegraphics[width=\textwidth]{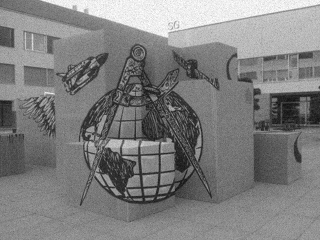}
        \caption{Gaussian loss: PSNR 26.89 dB}
    \end{subfigure}
    \quad
    \begin{subfigure}{0.3\textwidth}
        \centering
        \includegraphics[width=\textwidth]{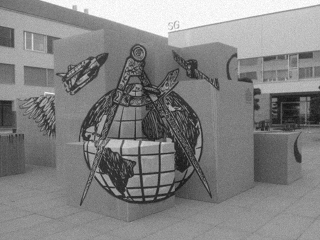}
        \caption{Poisson loss: PSNR 32.12 dB}
    \end{subfigure}
\caption{\textsl{Gaussian and Poisson phase retrieval under Poisson noise.}
See \S\ref{sec:poisson} for details.}
\label{fig:poisson}
\end{figure*}

\subsubsection{Synthetic phase retrieval}
\label{sec:synthetic-app}

In \S\ref{sec:synthetic-PR}, we considered a measurement model based on random coded diffraction patterns.
This is a synthetic setup inspired by an imaging system where
one can modulate the image before diffraction occurs~\cite{CLS15:Phase-Retrieval}.

For this example, the matrix $\mtx{A} \in \C^{d \times n}$ appearing in~\eqref{eqn:phase-lin-map}
takes the form
\begin{equation} \label{eqn:coded-diff}
\mtx{A} = \begin{bmatrix}
	\mtx{F}_n & \mtx{0}  &\cdots & \mtx{0} \\
	\mtx{0} &  \mtx{F}_n &\cdots & \mtx{0} \\
	\vdots &  \vdots  &\ddots & \vdots \\
	\mtx{0} &  \mtx{0} &\cdots & \mtx{F}_n
	\end{bmatrix}
	\begin{bmatrix}
	\mtx{D}_1 \\
	\mtx{D}_2 \\
	\vdots \\
	\mtx{D}_s \\
	\end{bmatrix}.
\end{equation}
In this expression, $\mtx{F}_n \in \C^{n\times n}$ is the discrete Fourier transform (DFT) matrix,
and $\mtx{D}_i \in \C^{n \times n}$ are diagonal matrices that describe modulating waveforms.
The parameter $s$ represents the number of $n$-dimensional views of the target vector
$\vct{x}_{\natural} \in \C^n$ that we acquire.  The total number of measurements $d = sn$.

We generate each diagonal entry of each matrix $\mtx{D}_i$ independently at random.
Each one is the product of two independent random variables $U_1$ and $U_2$,
where $U_1$ is chosen uniformly from $\{1, \mathrm{i},-1,-\mathrm{i}\}$
and $U_2$ is drawn from $\{\sqrt{2}/2, \sqrt{3}\}$ with probabilities $0.8$ and $0.2$.

In this setting, we can represent the linear map $\mathcal{A}$ in~\eqref{eqn:phase-lin-map}
using $3d$ bits, and we can apply it efficiently using the FFT algorithm.

For the scaling experiments in \S\ref{sec:synthetic-PR}, the measurements take the
form~\eqref{eqn:phase-retrieval-app} where the $\vct{a}_i^*$
are the rows of \eqref{eqn:coded-diff}.  We solve~\eqref{eqn:model-prob-psd}
with the loss $f_{\mathrm{gauss}}$, the linear map~\eqref{eqn:phase-lin-map}--\eqref{eqn:coded-diff},
and $\alpha$ set to the average of the data $b_i$.
The remaining details appear in \S\ref{sec:synthetic-PR}.
Table~\ref{tab:SpaceEff} summarizes
the storage costs for solving this type of synthetic phase retrieval problem
with five different convex optimization algorithms.

\begin{figure*}[!t]
    \centering
    \begin{subfigure}{0.195\textwidth}
        \centering
        \includegraphics[height=3.2cm]{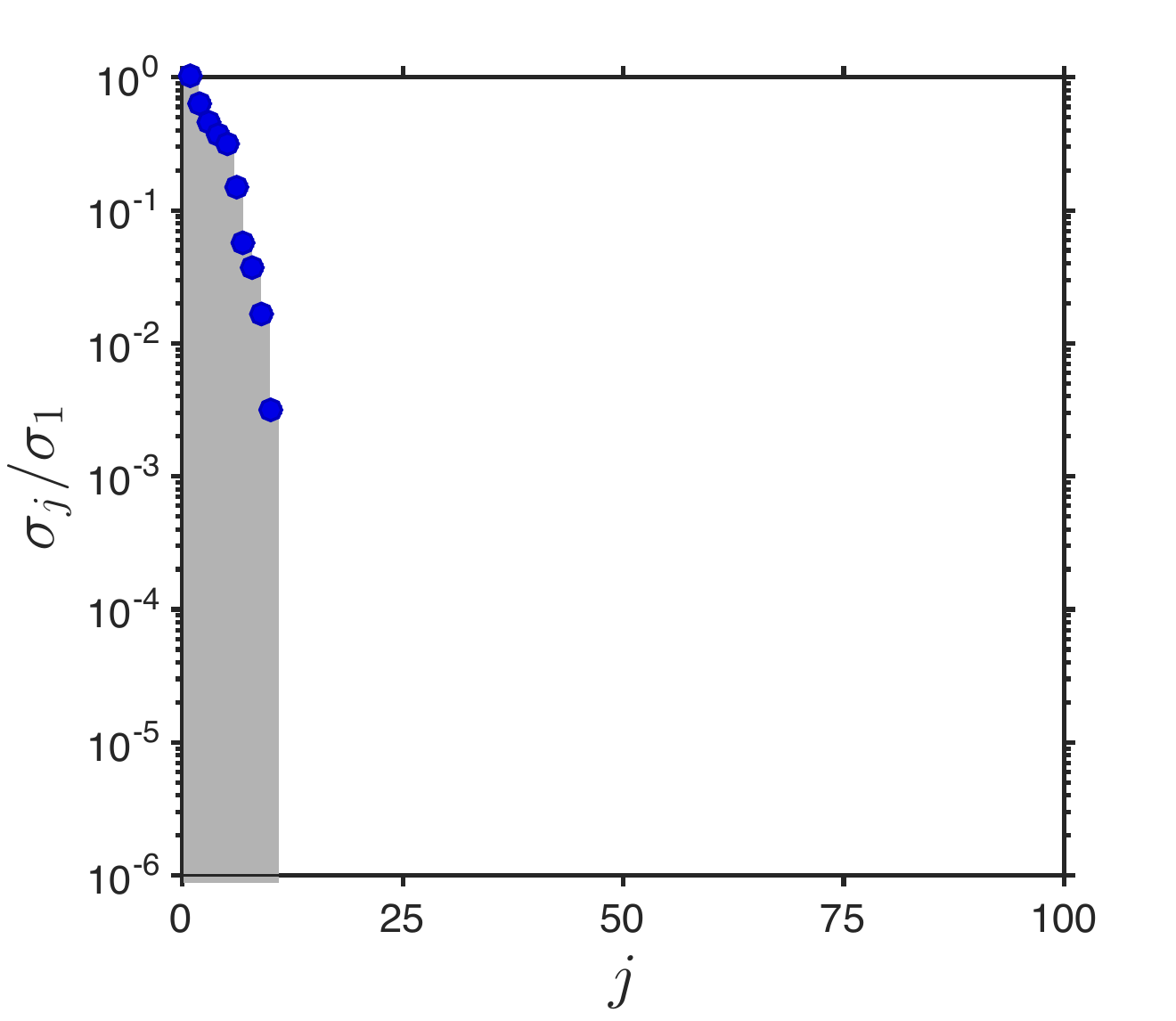}
        \caption*{iteration $10$}
    \end{subfigure}
    \begin{subfigure}{0.195\textwidth}
        \centering
        \includegraphics[height=3.2cm]{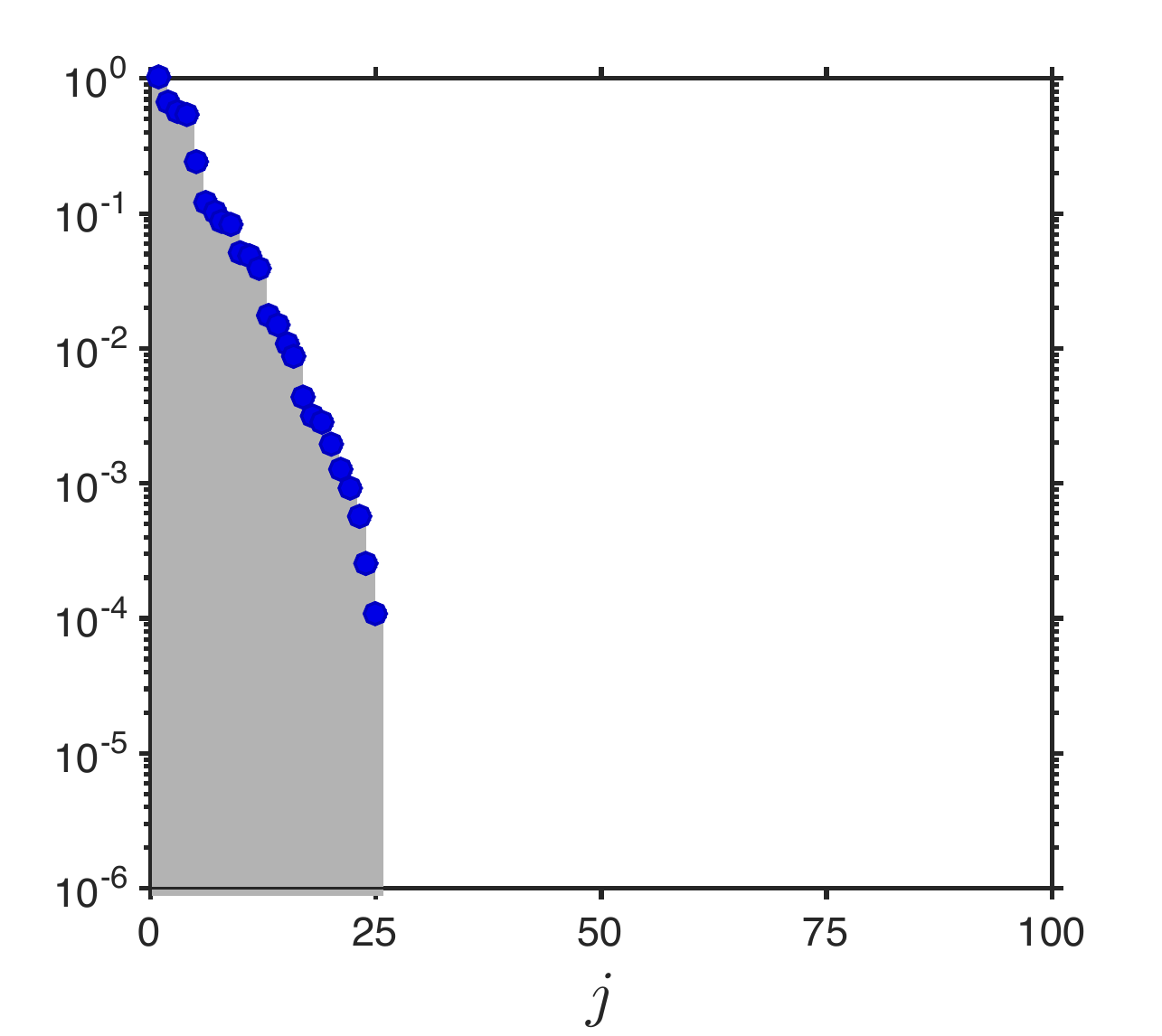}
        \caption*{iteration $25$}
    \end{subfigure}
    \begin{subfigure}{0.195\textwidth}
        \centering
        \includegraphics[height=3.2cm]{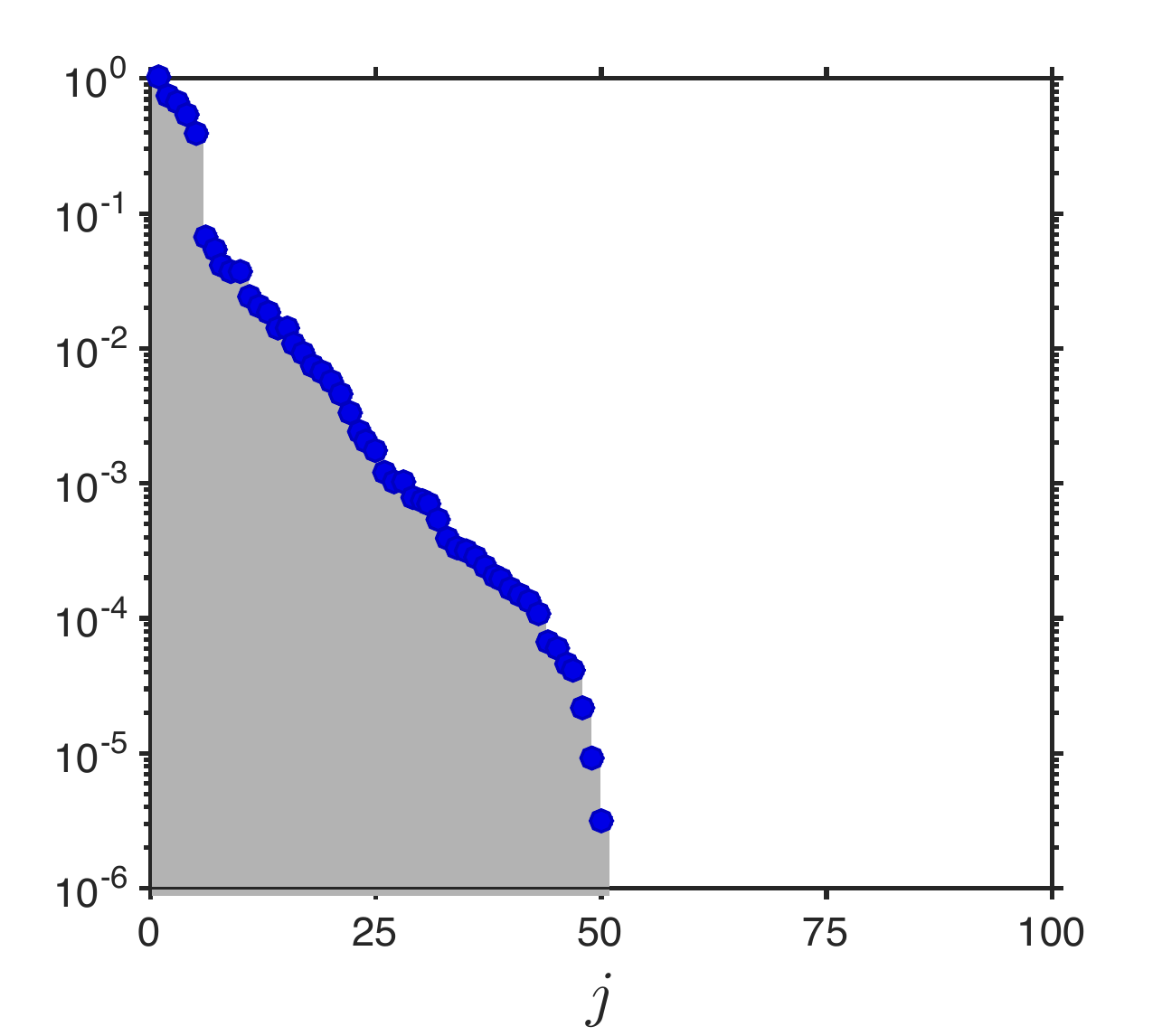}
        \caption*{iteration $50$}
    \end{subfigure}
    \begin{subfigure}{0.195\textwidth}
        \centering
        \includegraphics[height=3.2cm]{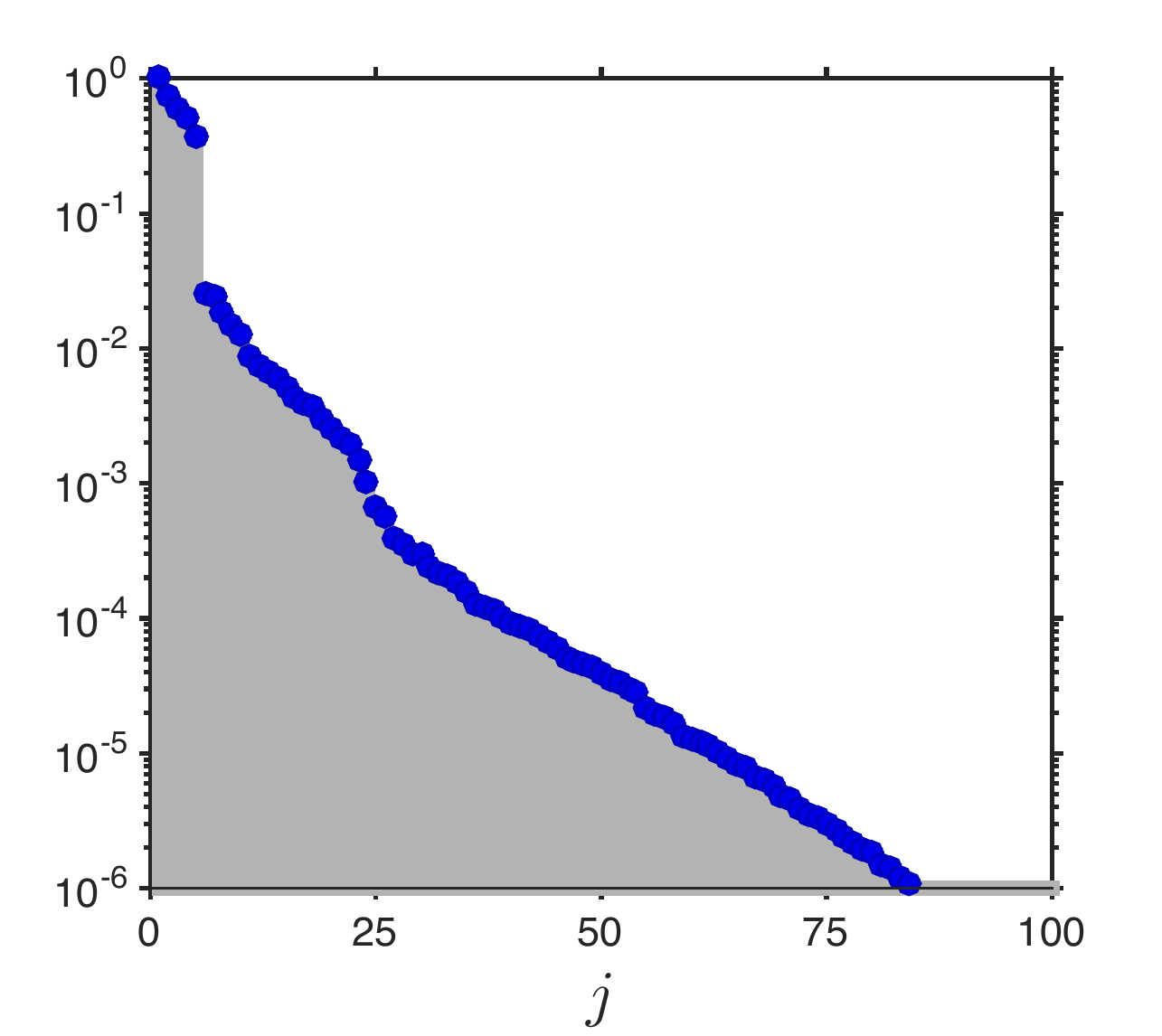}
        \caption*{iteration $100$}
    \end{subfigure}
    \begin{subfigure}{0.195\textwidth}
        \centering
        \includegraphics[height=3.2cm]{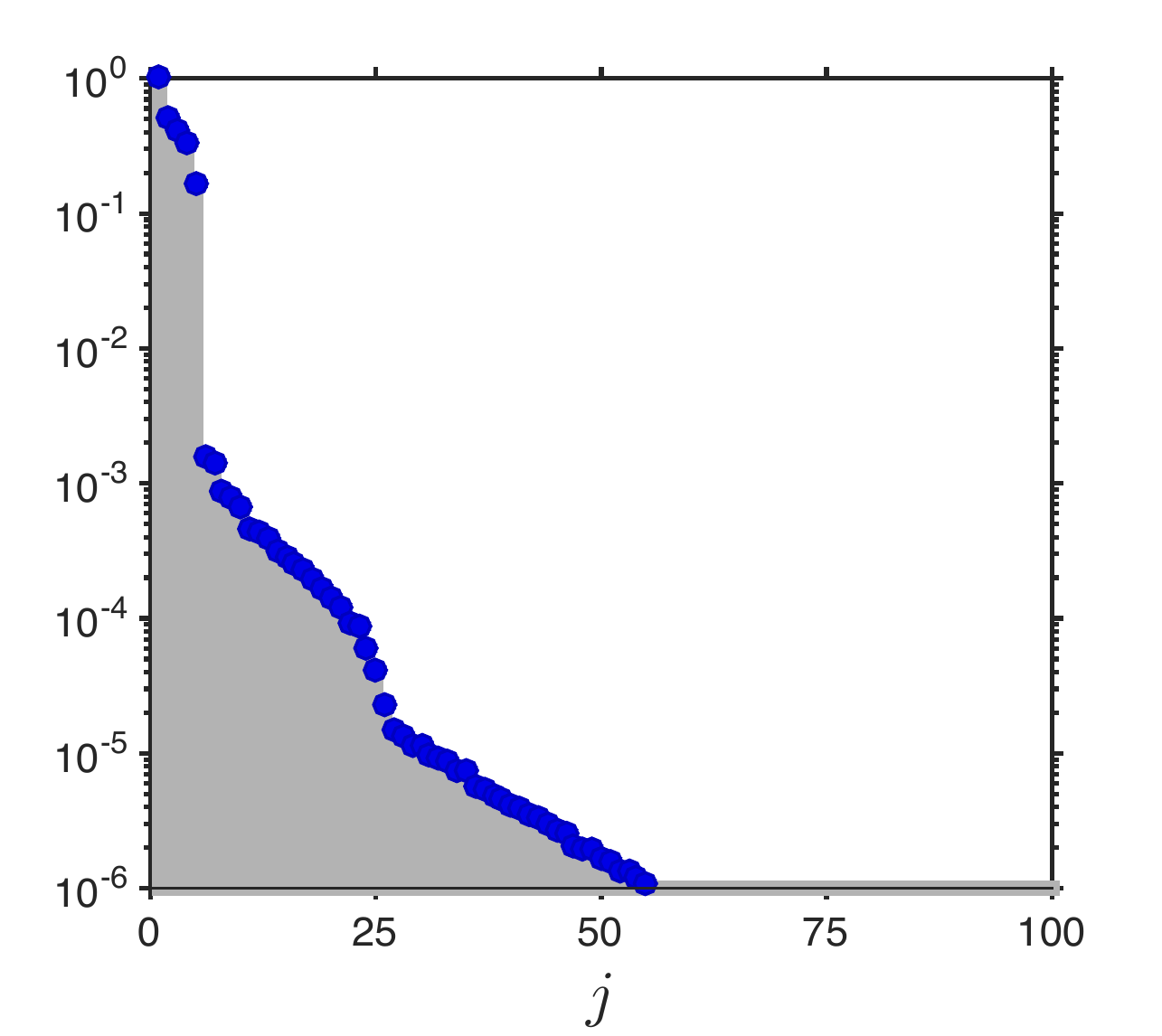}
        \caption*{iteration $500$}
    \end{subfigure}
\caption{\textsl{Spectrum of the CGM Iterates.}
 CGM is applied to the phase retrieval problem~\eqref{eqn:phase-convex}
 with the blood cell image data described in~\S\ref{sec:fourier-ptychography-problem}.
 These plots show the relative singular value spectrum $\sigma_j/\sigma_1$ for five
 specific iterates.  The solution to~\eqref{eqn:phase-convex} appears to have rank five.
 See \S\ref{sec:FP-app} for discussion.}
\label{fig:evolution-spectrum}
\end{figure*}

\begin{figure*}[t]
\begin{center}
\begin{subfigure}{0.48\linewidth}
\includegraphics[height=6cm]{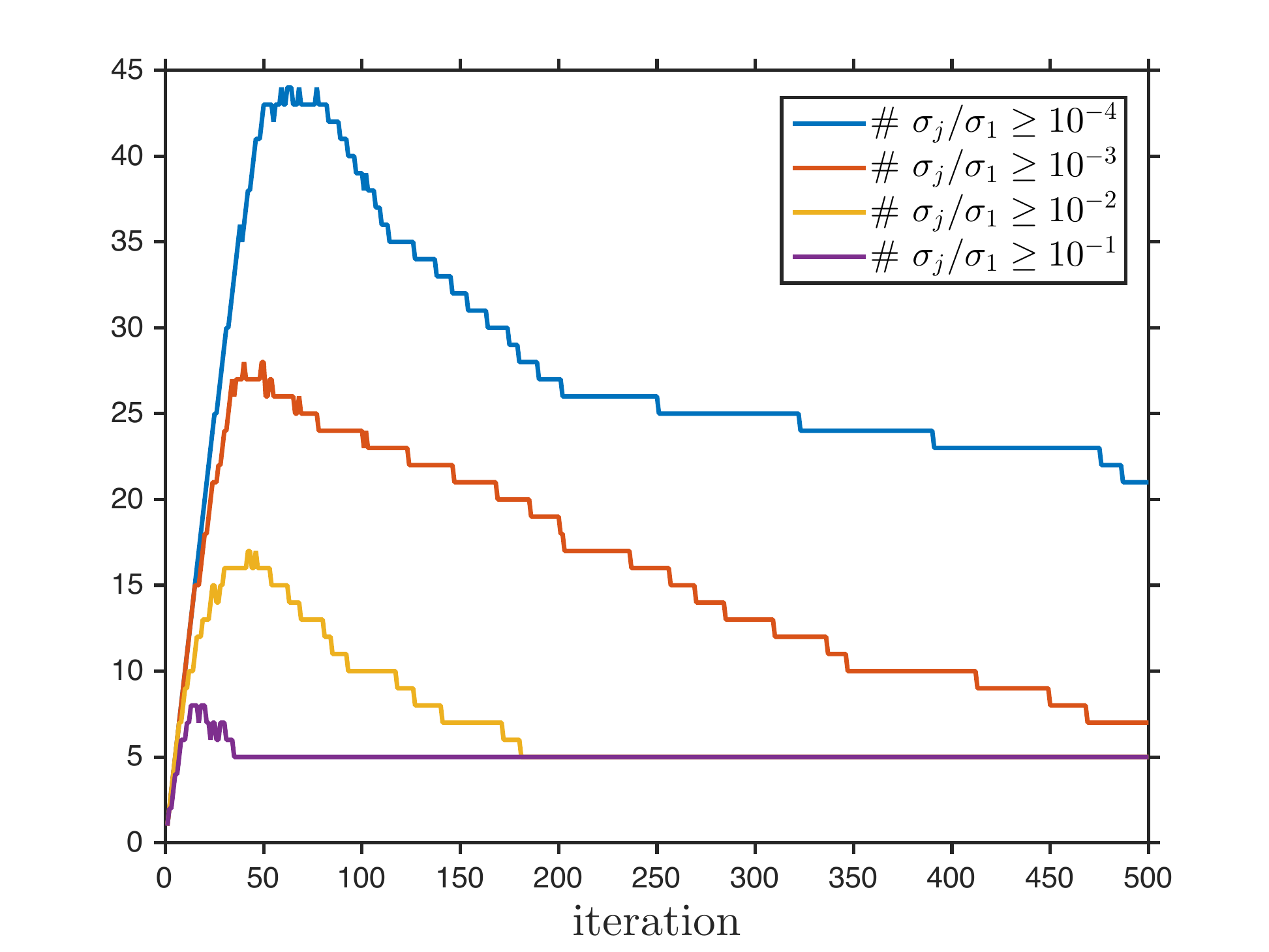}
\caption{The $\eps$-rank of the CGM iterates}
\end{subfigure}
\begin{subfigure}{0.48\linewidth}
\includegraphics[height=6cm]{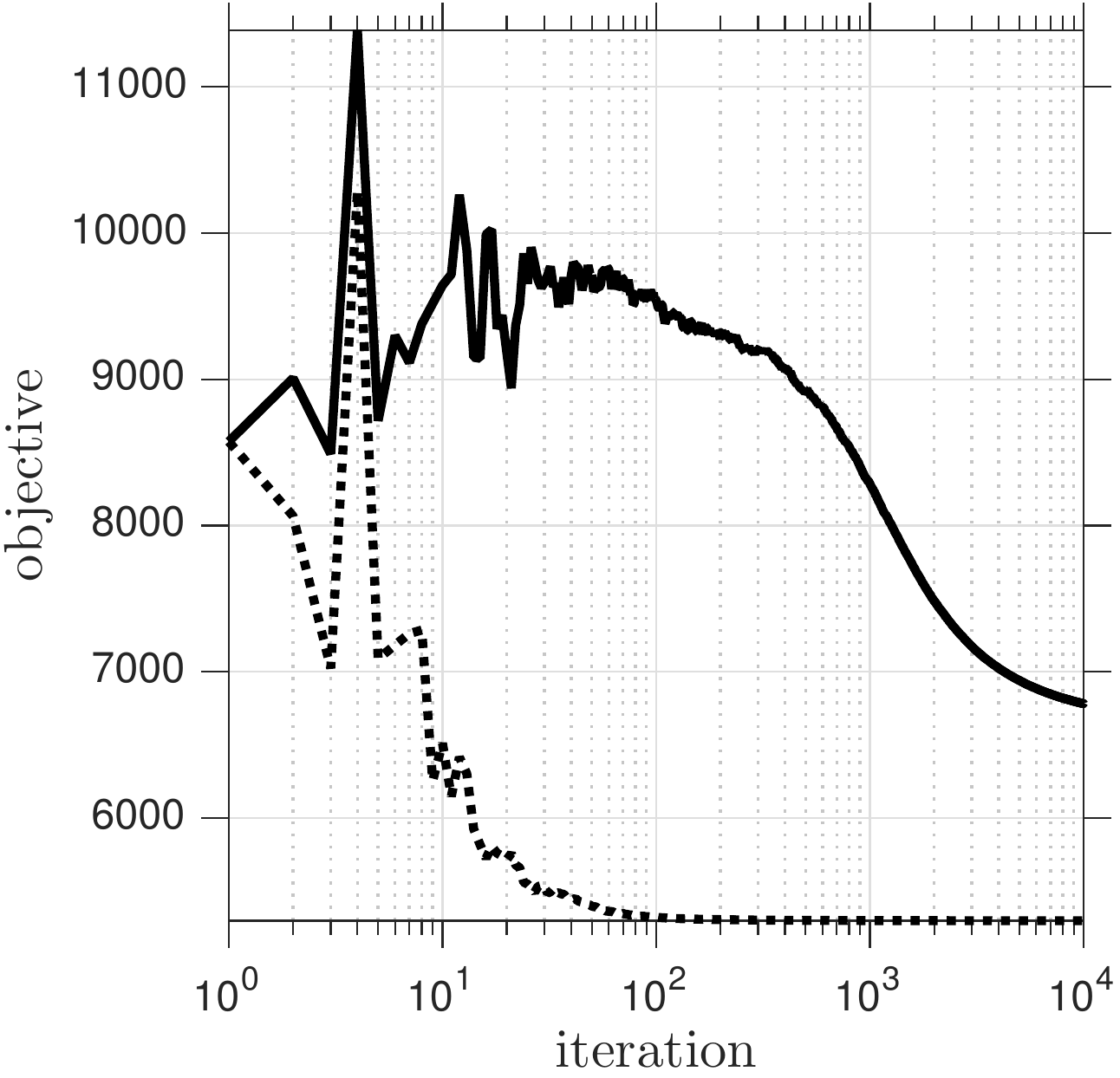}
\caption{SketchyCGM objective value}
\end{subfigure}
\caption{\textsl{Convergence of CGM and SketchyCGM.}
CGM and SketchyCGM are applied to the phase retrieval problem~\eqref{eqn:phase-convex}
 with the blood cell image data described in~\S\ref{sec:fourier-ptychography-problem}.
\textsc{(a)} This panel displays the evolution of the $\eps$-rank of the CGM iterates
for several choices of $\eps$.
\textsc{(b)} This panel shows the objective values achieved by the CGM iterates $\mtx{X}_t$
{(dashed)} and the rank-one SketchyCGM estimates $\hat{\mtx{X}}_t$ {(solid)}.
There is a gap because the solution to~\eqref{eqn:phase-convex} appears to have rank five;
see \textsc{(a)}.}
\label{fig:fourier-ptychography-convergence}
\end{center}
\end{figure*}

\begin{figure*}[t]
    \centering
    \begin{subfigure}{0.23\textwidth}
        \centering
        \includegraphics[width=\textwidth]{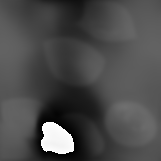}
        \caption*{iteration $10$}
    \end{subfigure}
    ~
    \begin{subfigure}{0.23\textwidth}
        \centering
        \includegraphics[width=\textwidth]{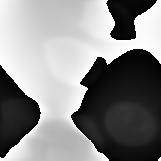}
        \caption*{iteration $100$}
    \end{subfigure}
    ~
    \begin{subfigure}{0.23\textwidth}
        \centering
        \includegraphics[width=\textwidth]{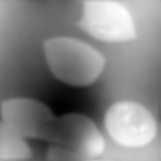}
        \caption*{iteration $1,\!000$}
    \end{subfigure}
    ~
    \begin{subfigure}{0.23\textwidth}
        \centering
        \includegraphics[width=\textwidth]{figs/SketchyCGM10000iter.png}
        \caption*{iteration $10,\!000$}
    \end{subfigure}
\caption{\textsl{Phase Reconstruction from Fourier Ptychographic Data.}
SketchyCGM is applied to the phase retrieval problem \eqref{eqn:phase-convex}
with the blood cell image data described in \S\ref{sec:fourier-ptychography-problem}.
These panels show the evolution of the phase reconstruction as a function of the
iteration.  Brightness indicates the complex phase; only relative differences are meaningful.
The image has diagnostic quality after 1,000 iterations, but it continues to sharpen.} 
\label{fig:reconstruction-by-iterations}
\end{figure*}

\subsubsection{Synthetic Poisson phase retrieval}
\label{sec:poisson}

In many imaging systems, a Poisson noise model is more appropriate than
a Gaussian noise model.  Let us demonstrate that the SketchyCGM
algorithm can solve synthetic phase retrieval problems with the
loss $f_{\mathrm{poisson}}$.  This work highlights the importance
of adapting the loss function to the noise distribution.

The setup is similar to~\S\ref{sec:synthetic-app}.
Fix a small image $\vct{x}_{\natural} \in \C^n$
with $n = 240 \times 320 = 76,\!800$ pixels.
Acquire $d = 20 n$ measurements of the form~\eqref{eqn:phase-retrieval-app}
using the coded diffraction model~\eqref{eqn:phase-lin-map}--\eqref{eqn:coded-diff}.
Each realization $\xi_i \in \R^d$ of the noise is drawn iid from a $\textsc{Poisson}$
distribution, whose mean is chosen so that the SNR of the measurements is 20 dB.

We formulate the phase retrieval problem using the template~\eqref{eqn:model-prob-psd}
with the loss $f_{\mathrm{poisson}}$,
the linear map~\eqref{eqn:phase-lin-map}--\eqref{eqn:coded-diff},
and with $\alpha$ set to the average of the measurements $b_i$.

To solve this problem via SketchyCGM,
it is necessary to make some small modifications~\cite{OLY+16:Frank-Wolfe-Works}.
We initialize the algorithm with the dual vector $\vct{z}_0 = d^{-1/2} \vct{1}$
and set the learning rate $\eta_t = 2/(3+t)$.  The rank parameter
$r = 1$, and the algorithm runs for 100 iterations.

Figure~\ref{fig:poisson} displays the results of this computation.
We also compare the output with a reconstruction obtained 
by applying the unmodified version of SketchyCGM to
solve~\eqref{eqn:model-prob-psd} with the (mismatched) loss
$f_{\mathrm{gauss}}$, the same linear map, and the same value of $\alpha$.
As expected, the Poisson formulation performs better.

\subsubsection{Fourier ptychography}
\label{sec:FP-app}

In \S\ref{sec:fourier-ptychography-problem}, we discussed a real-world
phase retrieval problem arising from Fourier ptychography~\cite{HCO+15:Solving-Ptychography}.
Let us explain the mathematical model for this problem
and present some additional numerical work.

For Fourier ptychography, the matrix $\mtx{A} \in \C^{d \times n}$ 
appearing in~\eqref{eqn:phase-lin-map} has the following structure.
\begin{equation} \label{eqn:fp-meas}
\mtx{A} = \begin{bmatrix}
	\mtx{F}_q^* & \mtx{0}  &\cdots & \mtx{0} \\
	\mtx{0} &  \mtx{F}_q^* &\cdots & \mtx{0} \\
	\vdots &  \vdots  &\ddots & \vdots \\
	\mtx{0} &  \mtx{0} &\cdots & \mtx{F}_q^*
	\end{bmatrix}
	\begin{bmatrix}
	\mtx{D}_1 \\
	\mtx{D}_2 \\
	\vdots \\
	\mtx{D}_s \\
	\end{bmatrix}
	\mtx{F}_n.
\end{equation}
In this expression, $\mtx{F}_n \in \C^{n \times n}$
is the 2D discrete Fourier transform,
and $\mtx{F}_q^* \in \C^{q \times q}$
is a low-dimensional 2D discrete Fourier transform.
The sparse matrices $\mtx{D}_{i} \in \C^{q\times n}$
describe bandpass filters;
each column of $\mtx{D}_i$ has at most one nonzero entry.
The number of measurements $d = sq$.
See~\cite{HCO+15:Solving-Ptychography} for details
about the physical setup and the mathematical model.

Section~\ref{sec:fourier-ptychography-problem} describes
a specific instance of Fourier ptychography imaging,
applied to a slide containing red blood cells.
To perform phase retrieval, we use the optimization problem~\eqref{eqn:model-prob-psd}
with loss $f_{\mathrm{gauss}}$ and with $\alpha = 1,\!400$.
The measurement vectors $\vct{a}_i^*$ are the rows of \eqref{eqn:fp-meas}.
We apply several algorithms, including a moderate number of iterations of CGM,
SketchyCGM with rank parameter $r = 1$,
the Burer--Monteiro method~\cite{BM03:Nonlinear-Programming,HCO+15:Solving-Ptychography},
and Wirtinger flow~\cite{CLS15:Phase-Retrieval}.

Figure~\ref{fig:evolution-spectrum}
and~\ref{fig:fourier-ptychography-convergence}(\textsc{a})
provide information about the spectrum of the CGM iterates.
For several values of $\eps$, we observe that the $\eps$-rank%
\footnote{The $\eps$-rank of a matrix is the number of singular values
that exceed $\eps \sigma_1$, where $\sigma_j$ is the $j$th largest singular value.}
of the iterates becomes large before declining to the value five.
This type of behavior is typical for CGM, and it scuttles CGM variants that
try to control the rank of the iterates directly.

Ideally, the solution $\mtx{X}_{\star}$ to the convex
formulation~\eqref{eqn:phase-convex} of a phase retrieval
problem has rank one.  But these computations suggest
that, for the blood cell data, the solution actually has rank five.
The increase in rank is due to nonidealities in the imaging system,
such as the spatial incoherence of the light source.  In essence,
the measurements capture a superposition of several slightly different
images.  Regardless, the convex model~\eqref{eqn:phase-convex}
is still effective, and a top eigenvector of the solution $\mtx{X}_{\star}$
still provides a good approximation to the image~\cite{HCO+15:Solving-Ptychography}.

Figure~\ref{fig:fourier-ptychography-convergence}(\textsc{b}) charts
the objective value $f(\mathcal{A}\hat{\mtx{X}}_t)$ attained by
the SketchyCGM iterates. 
We can implicitly compute the objective value $f(\mathcal{A}\mtx{X}_t)$ for the CGM iterate $\mtx{X}_t$
using the loop invariant~\eqref{eqn:CGM-sketch-loop}.  Note that
$\mtx{X}_t$ achieves a much smaller objective value than the
rank-one approximation $\hat{\mtx{X}}_t$ produced by SketchyCGM.
The discrepancy is due to the fact that the solution to
the optimization problem has approximate rank five.

Figure~\ref{fig:reconstruction-by-iterations} displays
snapshots of the SketchyCGM iterates as the algorithm proceeds.
We see that SketchyCGM already achieve a diagnostic quality
image after $1,\!000$ iterations, but the algorithm continues
to resolve the image as it runs.

Last, Figure~\ref{fig:ptych-grad} shows the phase gradient
of the solution to~\eqref{eqn:phase-convex} obtained with three different algorithms;
these plots provide an alternative view of Figure~\ref{fig:ptych}.
Roughly, the phase gradient indicates the change in the thickness of the sample
at a given location.  Therefore, absolute changes in the value of
the phase gradient are meaningful.
Note the unphysical oscillations in the reconstruction via
Burer--Monteiro~\cite{BM03:Nonlinear-Programming,HCO+15:Solving-Ptychography}.
The reconstruction via Wirtinger Flow~\cite{CLS15:Phase-Retrieval}
contains no information at all.

\begin{figure*}[!t]
    \centering
    \begin{subfigure}{0.3\textwidth}
        \centering
        \includegraphics[width=\textwidth]{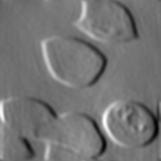}
    \end{subfigure}
    ~
    \begin{subfigure}{0.3\textwidth}
        \centering
        \includegraphics[width=\textwidth]{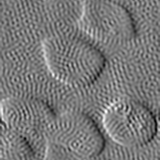}
    \end{subfigure}
    ~
    \begin{subfigure}{0.3\textwidth}
        \centering
        \includegraphics[width=\textwidth]{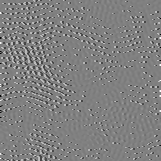}
    \end{subfigure}

	\vspace{0.5pc}

    \begin{subfigure}{0.3\textwidth}
        \centering
        \includegraphics[width=\textwidth]{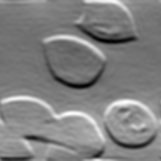}
        \caption{SketchyCGM}
    \end{subfigure}
    ~
    \begin{subfigure}{0.3\textwidth}
        \centering
        \includegraphics[width=\textwidth]{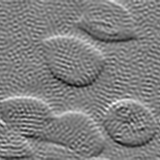}
        \caption{Burer--Monteiro}
    \end{subfigure}
    ~
    \begin{subfigure}{0.3\textwidth}
        \centering
        \includegraphics[width=\textwidth]{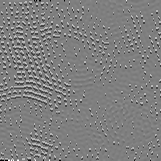}
        \caption{Wirtinger Flow}
    \end{subfigure}
\caption{\textsl{Phase Gradients for Fourier Ptychographic Imaging.}
Three algorithms are applied to the phase retrieval problem \eqref{eqn:phase-convex}
with the blood cell image data described in \S\ref{sec:fourier-ptychography-problem}.
	[\textsl{top}] The horizontal differences of the phase maps presented in Figure~\ref{fig:ptych}.
	[\textsl{bottom}] The vertical differences of the phase maps.  Brightness roughly
	corresponds to a change in thickness of the sample.}

 \label{fig:ptych-grad}
 \end{figure*}

\bibliographystyle{plainnat}
{\small \bibliography{ctuy16-small-bib}}

\end{document}